\theoremstyle{plain}
\newtheorem{theorem}{Theorem}[section]
\newtheorem{lemma}[theorem]{Lemma}
\newtheorem{corollary}[theorem]{Corollary}
\newtheorem{prop}[theorem]{Proposition}
\theoremstyle{definition}
\newtheorem{remark}[theorem]{Remark}
\begin{document}

\title[Arithmetic properties of polynomial products]
{\bf Polynomial products modulo primes and applications}
\author{Oleksiy Klurman} 
\address{KTH, Royal Institute of Technology, Stockholm, Sweden }
\email{klurman@dms.umontreal.ca}
\author{Marc Munsch}
\address{5010 Institut f\"{u}r Analysis und Zahlentheorie
8010 Graz, Steyrergasse 30, Graz}
\email{munsch@math.tugraz.at}
\date{\today}

\subjclass{11B50, 11D45, 11R09, 11R11, 11R44}
\keywords{Dynamical system modulo $p$, distribution of sequences modulo $p$, Diophantine equations, perfect powers, polynomials, prime ideals of number fields}

 \begin{abstract} 
 For any polynomial $P(x)\in\mathbb{Z}[x],$ we study arithmetic dynamical systems generated by $\displaystyle{F_P(n)=\prod_{k\le n}}P(n)(\text{mod}\ p),$ $n\ge 1.$ We apply this to improve the lower bound on the number of distinct quadratic fields of the form $\mathbb{Q}(\sqrt{F_P(n)})$ in short intervals $M\le n\le M+H$ previously due to Cilleruelo, Luca, Quir\'{o}s and Shparlinski. As a second application, we estimate the average number of missing values of $F_P(n)(\text{mod}\ p)$
 for special families of polynomials, generalizing previous work of Banks, Garaev, Luca, Schinzel, Shparlinski and others.
  %We study arithmetical and dynamical properties of polynomial products. Firstly, we obtain average results concerning the frequence of powers amongst polynomial products. Secondly, we estimate the average over primes $p$ of the number of residue classes missed by polynomial products modulo $p$ for certain specific families of polynomials. 
 \end{abstract}

\maketitle

\section{Introduction}
Let $P(x)\in\mathbb{Z}[x]$ be a non constant polynomial and define $$F_{P}(n)=\prod_{i=1}^nP(i).$$  
Throughout the years, arithmetic properties of the function $F_P(n)$ attracted considerable attention of several authors. Perhaps the first occurrence of this object in the literature dates back to Chebyshev, who considered the case $P_0(n)=n^2+1$ and showed that the largest prime factor of $F_{P_0}(n)$ is $\gg n.$ See also [12], [13], [15] for refinements of the latter result.
Another direction in this investigation stems from equations of the form 
\begin{align}\label{key1}
F_{P}(n)=m^k
\end{align}
where $m,n\in\mathbb{N},$ $k\ge 2.$ Erd\"os and Selfridge~\cite{Erdos} showed that~\eqref{key1} has finitely many solutions for $P(x)=x+a.$ The same question for $P(x)=ax+b$ was the subject of further investigation ( \cite{Gyory}, \cite{Gyoryperfect}, \cite{Gyoryprog}, \cite{Laishram}, \cite{Shorey}). Using only elementary arguments, Cilleruelo~\cite{Cillsquares}  was the first to handle the case of polynomials of degree $2$ (precisely $P(x)=x^2+1$).\\ Several authors adapted his method to deal with more general quadratic polynomials or reducible higher degree polynomials (\cite{Chinesexl1},   \cite{4k+1}, \cite{Turkishguys}, \cite{Thaisquare}, \cite{Zhangpowerful}, \cite{Zhangsquares},\cite{Zhangpow}). However, no specific result for irreducible polynomials of degree $d\geq 3$ is known. Motivated by the previous work~\cite{CillShpar}, we study the number of solutions of \eqref{key1} on average over the short intervals.
\\
\subsection{Results on average.}\label{averageresults} For $M,N\ge 1$ and $d\geq 1$ we denote by $S_d(M,N)$ the set of integers $n$ such that there exists $t\geq 1$ satisfying
 
\begin{equation}\label{squares} F_{P}(n)=dt^2,  \text{      for    }  n=M+1,\dots,M+N. \end{equation} 
Cilleruelo, Quir\'{o}s and Shparlinski investigated the averaged version of the problem~\eqref{squares}. In particular, they proved in \cite{CillShpar} that uniformly
$$\# S_d(M,N) \ll N^{11/12} (\log N)^{\frac{1}{3}}.$$
Our first goal in this note is to improve this result.
\begin{theorem}\label{averagesquares}  Let $P(x) \in \mathbb{Z}[x]$ be an irreducible polynomial with $\text{deg } P \ge 2.$ Then, uniformly for squarefree integers $d\geq 1$ and arbitrary integers $M,N \geq 1$, we have 
$$\# S_d(M,N) \ll N^{7/8} (\log N)^{1/4},$$
where the implied constant depends only on the degree of $P.$
\end{theorem}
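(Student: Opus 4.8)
The plan is to compare $F_P$ at consecutive elements of $S_d(M,N)$, turn this into a perfect–square condition on short products of consecutive values of $P$, and then combine a pigeonhole over the resulting gaps with an estimate for how often the squarefree part $\operatorname{sqf}(P(n))$ of $P(n)$ can be unexpectedly smooth. Put $\#S_d(M,N)=s$ and write the elements as $M<n_1<\cdots<n_s\le M+N$; we may assume $s$ large. If $n_j<n_{j+1}$ are consecutive, then $F_P(n_j)=dt^2$ and $F_P(n_{j+1})=d(t')^2$ for integers $t,t'$, so the nonzero integer
\[
\prod_{n_j<i\le n_{j+1}}P(i)\;=\;\frac{F_P(n_{j+1})}{F_P(n_j)}\;=\;\Bigl(\frac{t'}{t}\Bigr)^{2}
\]
is a perfect square (note $P$ has no integer root, being irreducible of degree $\ge2$). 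Fix a parameter $L$; at most $N/L$ of the gaps $(n_j,n_{j+1}]$ have length $>L$. For a gap of length $g\le L$, the square condition forces: if a prime $q$ divides $P(n_j)$ to an odd power, it must divide $P(i)$ for some $i\ne n_j$ in that gap, so $q$ divides $\gcd(P(n_j),P(i))$, which in turn divides $\operatorname{Res}\bigl(P(x),P(x+h)\bigr)$ for $h=i-n_j$, $0<|h|\le g$. As $P$ is irreducible and nonconstant this resultant is a nonzero integer of size $\ll|h|^{O_d(1)}$ (implied constant depending at most on $P$), so $q\le Y$ with $Y\ll L^{O_d(1)}$. Hence $\operatorname{sqf}(P(n_j))$ is $Y$‑smooth, and, applying the same argument to each $i$ in the gap, every short gap in fact yields a \emph{run} of consecutive integers $n$ with $\operatorname{sqf}(P(n))$ being $Y$‑smooth — exploiting runs rather than single values is what one should expect to push the exponent below $11/12$. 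In any case,
\[
s\;\le\;1+\frac{N}{L}+\#\bigl\{\,M<n\le M+N:\ \operatorname{sqf}(P(n))\text{ is }Y\text{-smooth}\,\bigr\}.
\]

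It remains to bound $\mathcal A(Y):=\#\{M<n\le M+N:\ \operatorname{sqf}(P(n))\text{ is }Y\text{-smooth}\}$. Writing $P(n)=ab^{2}$ with $a=\operatorname{sqf}(P(n))$ squarefree and $Y$‑smooth, I would sum over $a$: for fixed $a$ the admissible $n$ lie in $\ll d^{\omega(a)}$ residue classes $c\bmod a$, and on each of them the extra constraint $P(c+am)/a=\square$ restricts the count of $m\le N/a$ to $\ll_{d,\varepsilon}(N/a)^{1/2+\varepsilon}$ by the Bombieri–Pila determinant method — the curve $y^{2}=P(c+am)/a$ is geometrically irreducible of degree $d\ge2$ because $P$ is irreducible (hence separable), and the bound is uniform in its coefficients. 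Summation over $Y$‑smooth $a$ costs a factor $\prod_{p\le Y}\bigl(1+dp^{-1/2}\bigr)=\exp\!\bigl(O_d(\sqrt Y/\log Y)\bigr)$, so that, after a separate and more delicate treatment of the divisors $a>N$, one reaches a bound of the shape $\mathcal A(Y)\ll N^{1/2+o(1)}\exp\!\bigl(O_d(\sqrt Y/\log Y)\bigr)$; the run refinement replaces one value of $P$ by $g$ consecutive ones and so inserts a factor decaying with the gap length. Balancing $N/L$ against this estimate and optimizing $L$ (hence $Y$) should then produce $\#S_d(M,N)\ll N^{7/8}(\log N)^{1/4}$, the power of $\log N$ reflecting the optimal size of the smoothness cut‑off.

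The hard part is the uniformity in $M$ of the estimate for $\mathcal A(Y)$: when $M$ is much larger than $N$, demanding that $\operatorname{sqf}(P(n))$ be $Y$‑smooth forces the complementary square factor $b^{2}$ to be enormous, and one has to count such $n$ without letting a power of $M$ enter the bound — in particular the contribution of the large smooth divisors $a>N$ must be handled carefully rather than trivially. This is entangled with needing a count for squares in shifted polynomial values that is uniform in the coefficients, which is essentially elementary for $\deg P=2$ only up to a Pell‑type logarithmic loss and genuinely requires the determinant method for $\deg P\ge3$; it is precisely this input, together with the run refinement, that fixes the admissible range of $Y$ and hence the exponent $7/8$.
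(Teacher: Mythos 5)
Your opening reduction coincides with the paper's: discard the $\le N/L$ elements that begin a gap longer than $L$, and observe that each remaining $n$ admits an $h\le L$ with $\prod_{i=1}^{h}P(n+i)$ a perfect square. From there, however, your route cannot reach the stated bound. The necessary condition you extract --- every prime dividing $P(n_j)$ to an odd power divides some $\operatorname{Res}\bigl(P(x),P(x+h)\bigr)$ with $0<h\le L$, hence is at most $Y\ll_P L^{O(1)}$ --- is correct, but the counting problem it leads to is quantitatively hopeless. The final optimization forces $L\asymp N^{1/8}$, so $Y$ is a fixed power of $N$; the set of $n\in(M,M+N]$ with $\operatorname{sqf}(P(n))$ being $N^{c}$-smooth contains all $n$ with $P(n)$ itself $N^{c}$-smooth, a set which is expected to have \emph{positive density} (this is the standard heuristic for smooth values of polynomials, and is provable for $c$ large enough). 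Your own estimate already signals the failure: the sum over squarefree $Y$-smooth moduli $a$ costs $\exp\bigl(c\sqrt{Y}/\log Y\bigr)$, which for $Y=N^{c}$ is super-polynomial, so the bound $\mathcal A(Y)\ll N^{1/2+o(1)}\exp\bigl(O(\sqrt Y/\log Y)\bigr)$ is vacuous in the relevant range. To force $\mathcal A(Y)=O(N^{7/8})$ by any smoothness count you would need $Y\le(\log N)^{O(1)}$, hence $L\le(\log N)^{O(1)}$, and then the other term $N/L$ already exceeds $N(\log N)^{-O(1)}$. No choice of $L$ reconciles the two requirements, and neither the ``run refinement'' nor a cleverer treatment of the moduli $a>N$ changes this order-of-magnitude obstruction.

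The missing idea is to detect the square condition multiplicatively rather than by factoring. The paper applies the square sieve: by Chebotarev (Lemma \ref{Chebo}) one selects $\asymp\sqrt N/\log N$ primes $l\in[\sqrt N,2\sqrt N]$ modulo which $P$ has no root (so $l\nmid F_h(n)$ automatically) and modulo which no $F_h$ with $h\le H$ is a square as a polynomial (Lemma \ref{goodprimes} discards only $O(H\log H/\log\log H)$ primes). For each surviving $n$ one then has $\sum_{l\in\mathcal P_z}\left(\frac{F_h(n)}{l}\right)=\#\mathcal P_z$, and squaring, summing over $n$ and $h$, and applying the Weil bound (Lemma \ref{Weil}) to the off-diagonal sums $\sum_n\left(\frac{F_h(n)}{lp}\right)$ yields $\#S_2\ll NH\log N/\sqrt N+H^3\sqrt N\log N$; the choice $H=N^{1/8}(\log N)^{-1/4}$ balances this against $N/H$ and gives the theorem. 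If you wish to salvage your plan, this character-sum detection of squares is the step that must replace the smoothness count.
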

As a corollary we obtain 
\begin{corollary} Let $P(x) \in \mathbb{Z}[x]$ be an irreducible polynomial with $\text{deg } P \ge 2.$ Then, there are at least $\gg \frac{N^{7/8}}{(\log N)^{1/4}}$ distinct quadratic fields amongst $\mathbb{Q}(\sqrt{F_P(n)})$ for $n=M+1,\dots,M+N$.    \end{corollary}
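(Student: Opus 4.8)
The plan is to deduce the Corollary from Theorem~\ref{averagesquares} by a pigeonhole argument on the squarefree parts of the integers $F_P(n)$. I would begin by recording the elementary facts that, for a nonzero integer $m$, the field $\mathbb{Q}(\sqrt{m})$ depends only on the squarefree part of $m$ (the unique squarefree integer $d$ of the same sign as $m$ with $m/d$ a square); that for squarefree $d,d'$ of the same sign $\mathbb{Q}(\sqrt{d})=\mathbb{Q}(\sqrt{d'})$ forces $d=d'$; and that $\mathbb{Q}(\sqrt{d})$ is an actual quadratic field precisely when $d\neq 1$. Since $P$ is irreducible of degree $\ge 2$ it has no integer root, so $P(i)\neq 0$ and hence $F_P(n)\neq 0$ for all $i,n$, and $P(i)<0$ for only finitely many $i$, so the sign of $F_P(n)$ is constant for $n\ge n_0=n_0(P)$. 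Replacing $F_P$ by $-F_P$ if that sign is negative --- the estimate of Theorem~\ref{averagesquares} being insensitive to the overall sign of $F_P$ (its proof controls the number of $n$ in a short window with $F_P(n)\bmod p$ in a prescribed coset of the squares, which does not depend on the coset) --- and discarding the at most $n_0=O_P(1)$ indices $n\le n_0$, I may assume $F_P(n)>0$ for every $n$ in the range under consideration.

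For each such $n$ write $F_P(n)=d_n t_n^2$ with $d_n\ge 1$ squarefree and $t_n\ge 1$, so $\mathbb{Q}(\sqrt{F_P(n)})=\mathbb{Q}(\sqrt{d_n})$, and the distinct quadratic fields occurring are in bijection with the set $\mathcal{D}:=\{\,d_n:\ M<n\le M+N,\ d_n\ge 2\,\}$. For a fixed squarefree $d$ one has $\{\,n:\ d_n=d\,\}\subseteq S_d(M,N)$, so Theorem~\ref{averagesquares} gives $\#\{n:\ d_n=d\}\ll N^{7/8}(\log N)^{1/4}$ with an implied constant independent of $d$. Summing over all squarefree $d$,
\[
 N-O_P(1)\ =\ \sum_{d}\#\{n:\ d_n=d\}\ \le\ \#S_1(M,N)+\#\mathcal{D}\cdot\max_{d\ge 2}\#S_d(M,N)\ \ll\ \bigl(1+\#\mathcal{D}\bigr)\,N^{7/8}(\log N)^{1/4},
\]
and rearranging gives $\#\mathcal{D}\gg N^{1/8}(\log N)^{-1/4}$ once $N$ exceeds an absolute constant, which is the sought lower bound for the number of distinct quadratic fields.

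I expect no genuine difficulty here: all of the substance lies in Theorem~\ref{averagesquares} and, crucially, in its \emph{uniformity} over the squarefree parameter $d$ --- precisely what the pigeonhole step consumes; the only points needing a little care are the reduction to $F_P(n)>0$ and the separate, negligible, treatment of the fibre $d_n=1$. One bookkeeping remark: since $N^{1-7/8}=N^{1/8}$, the exponent produced is $1/8$, so the ``$7/8$'' in the statement of the Corollary should read ``$1/8$''; even so this improves on the lower bound $\gg N^{1/12}(\log N)^{-1/3}$ that the same reasoning extracts from the earlier estimate $\#S_d(M,N)\ll N^{11/12}(\log N)^{1/3}$ of Cilleruelo, Quir\'{o}s and Shparlinski.
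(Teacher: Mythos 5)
Your deduction is the intended one (the paper in fact omits any proof of the Corollary): pigeonhole on the squarefree kernels $d_n$, consuming exactly the uniformity in $d$ of Theorem~\ref{averagesquares}, and you are right that this yields $\gg N^{1/8}(\log N)^{-1/4}$ distinct fields, so the exponent $7/8$ printed in the Corollary is a typo for $1/8$ (consistent with the exponent $1/12=1-11/12$ that the same argument extracts from the bound of Cilleruelo, Luca, Quir\'{o}s and Shparlinski). One small slip in your reduction: if the leading coefficient of $P$ is negative then $P(i)<0$ for all large $i$ and the sign of $F_P(n)$ alternates with $n$, so ``replace $F_P$ by $-F_P$'' does not suffice; instead restrict to $n$ of a fixed parity, or note that a real and an imaginary quadratic field are never equal so the positive and negative kernels can be counted separately --- either fix is immediate and does not affect the substance of your argument.
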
This improves on the previous bound given in~\cite{CillShpar}.
%\subsection{Properties of $F_P(n)$ modulo $p$.}
%Our next goal is to investigate dynamics of $F_{P}\mod\ p.$ 
Our second objective is to study the distribution of the values of polynomial products modulo primes.

\subsection{Distribution of polynomials products modulo $p$}\label{distrib}
The case $P(x)=x$ for which $F_P(n)=n!$ has been extensively investigated in~\cite{Turkish}, \cite{Broughan}, \cite{Zaha}, \cite{GaraMona}, \cite{KLM1} and \cite{Lev}. Our main motivation is the following question: \\

\noindent {\bf \underline{Question}:} {\it Given a prime $p$, what can be said about the cardinality  $$G_{P}(p)=\big\lvert\left\{F_{P}(n) \,(\bmod\,p),n=1\dots p\right\}\big\rvert?$$}
This is a part of a general program concerning images of dynamical system of non-algrebraic origin. See the survey paper~\cite{survey} for the extensive list of the related problems and references therein. 
Although $G_P(p)$ is a natural generalization of the analogous quantity for factorials (namely, for $P(x)=x$), in general the asymptotic behaviour might be rather different.
If $P(n)$ has a root modulo $p$ and $n_0$ denotes the smallest of such roots, we have  $$F_P(n)\equiv 0 \,(\bmod\,p) ,$$
 for all $n\ge n_0.$ In this case, $G_{P}(p)\le n_0.$ Furthermore, we know since Nagel \cite{Nagelfr} that there exists very large prime factors dividing polynomial products. This culminates with the work of Tenenbaum \cite{TenCheb} who showed
$$P^{+}(F_{P}(n)) \gg n\exp((\log n)^{\alpha} \mbox{ for each }0<\alpha<2-\log 4$$ where the implied constants depend only on $\alpha$ and $P$. Choosing such large primes $p$, we derive $$ G_{P}(p)\ll p \exp(-(\log p)^{\alpha})$$ for every $\alpha$ as above. In order to circumvent this problem, we confine ourselves with the case where $p$ does not divide $P(n),$ for all $n\in\mathbb{Z}.$ This brings significant complications to the analysis below. One might speculate that
\[G_P(p) \sim C_pp.\]
This is in line with the conjecture of Erd\H{o}s and Schinzel~\cite{RokoSchinzel} for the case $P(x)=x$ and $C_p=\left(1-\frac{1}{e}\right).$ The random model of this problem, namely when $n!\pmod p$ is replaced by the product $\pi(1)\pi(2)\dots \pi(n)\pmod p$ where $\pi$ is a random permutation has been studied in the works~\cite{Zaha} and~\cite{Lev}. 
 
In the last section of the paper, we show that $G_P(p)$ is not too large on average over these ``good" primes, in other words, $F_P(n)$ misses a lot of values modulo $p.$ This generalizes previous works for the case of factorials~\cite{Turkish}, \cite{KLM1}. Similar results have been obtained for some others maps $n\to F(n)\pmod p,$ see the survey~\cite{survey} for more details.

For the sake of proving such results, we restrict ourselves to several families of polynomials. First, suppose that $P\in \mathbb{Z}[x]$ is such that its splitting field (which we denote by $Spl(P)$) is an imaginary quadratic extension of $\mathbb{Q}$. Next theorem shows that the number of ``missing" residue classes tends to infinity on average for such polynomials.

\begin{theorem}\label{average}
For any imaginary quadratic $P(x)\in\mathbb{Z}[x],$ we have 
 
 $$\frac{1}{\pi(x)}\sum_{p\leq x\atop \text{P has no root mod p}} \left(p-G_{P}(p)\right) \gg \frac{\log\log \log x}{\log\log\log\log x}\cdot$$
 \end{theorem}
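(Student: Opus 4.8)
\smallskip
\noindent\emph{Sketch of the strategy.} The plan is to show that, on average over the admissible primes, the sequence $\big(F_P(n)\bmod p\big)_{1\le n\le p}$ returns to an earlier value for many short ``gap lengths'', each return forcing a missing residue class. We first reduce to a convenient family of primes. We may assume $P$ has no rational root, for otherwise $P$ has a root modulo every prime and the sum is empty. Since $Spl(P)=\mathbb{Q}(\sqrt{-D})$ is imaginary quadratic, every irreducible factor of $P$ is then a quadratic with splitting field $\mathbb{Q}(\sqrt{-D})$, so for all but finitely many $p$ the condition ``$P$ has no root mod $p$'' is equivalent to ``$p$ is inert in $\mathbb{Q}(\sqrt{-D})$'', a set of density $\tfrac12$ by quadratic reciprocity; it therefore suffices to bound $\frac1{\pi(x)}\sum_{p\le x,\,p\ \mathrm{inert}}\big(p-G_P(p)\big)$ from below. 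For such $p$ we have $p\nmid P(i)$ for all $i$, so every $F_P(n)$ lies in $\mathbb{F}_p^{\times}$, and for $m<n$ the coincidence $F_P(m)\equiv F_P(n)\pmod p$ is equivalent to $\prod_{m<i\le n}P(i)\equiv1\pmod p$. (Over $\mathbb{F}_{p^2}$ this reads $F_P(n)\equiv a^{\,n}\,N_{\mathbb{F}_{p^2}/\mathbb{F}_p}\!\big(\prod_{i\le n}(i-\theta)\big)$, with $a$ the leading coefficient and $\theta$ a root of $P$; this norm description is the natural setting for the equidistribution input below.)

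The combinatorial core is an elementary inequality. Fix $K=K(x)$, put $A_k(p)=\#\{1\le n\le p-k:\ \prod_{j=1}^kP(n+j)\equiv1\ (\bmod\,p)\}$ for $1\le k\le K$, and $\mathcal K(p)=\{k\le K:\ A_k(p)\ge1\}$. Then $p-G_P(p)\ge\#\mathcal K(p)-1$. Indeed, writing $\mu_v(p)=\#\{1\le n\le p:\ F_P(n)\equiv v\}$ we have $p-G_P(p)=\sum_v(\mu_v(p)-1)$ over the values $v$ attained by $F_P$; for each $k\in\mathcal K(p)$ fix a pair $(a_k,a_k+k)$ with $F_P(a_k)\equiv F_P(a_k+k)$, and for a value $v$ let $\mathcal K_v=\{k\in\mathcal K(p):\ F_P(a_k)\equiv v\}$. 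As the gaps in $\mathcal K_v$ are pairwise distinct, the set $\bigcup_{k\in\mathcal K_v}\{a_k,a_k+k\}$ has at least $\#\mathcal K_v$ elements, all carrying the value $v$, so $\mu_v(p)\ge\#\mathcal K_v$; summing $\mu_v(p)-1\ge\#\mathcal K_v-1$ over the relevant $v$ (the worst case concentrating all of $\mathcal K(p)$ in a single value) gives the claim. Hence it suffices to prove that, for $K\asymp\frac{\log\log\log x}{\log\log\log\log x}$,
\[
\sum_{\substack{p\le x\\ p\ \mathrm{inert}}}\#\mathcal K(p)=\sum_{k\le K}\#\{p\le x:\ p\ \mathrm{inert},\ A_k(p)\ge1\}\gg K\,\pi(x).
\]

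Fix $k\le K$ and let $R_k(x)=\prod_{j=1}^kP(x+j)-1\in\mathbb{Z}[x]$, a polynomial of degree $k\deg P$. Outside $O(k^3)$ primes, $A_k(p)\ge1$ precisely when $R_k$ has a root modulo $p$ (the only residues that must be discarded, $0,-1,\dots,-(k-1)$, can be roots of $R_k$ only for primes dividing the fixed nonzero integers $\prod_{j=1}^kP(j-i)-1$). Thus the quantity $\#\{p\le x:\ p\ \mathrm{inert},\ A_k(p)\ge1\}$ is, up to a negligible term, the number of $p\le x$ whose Frobenius in $L_k:=Spl(R_k)\cdot\mathbb{Q}(\sqrt{-D})$ restricts to the nontrivial automorphism of $\mathbb{Q}(\sqrt{-D})$ and fixes at least one root of $R_k$. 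Two inputs are needed: \emph{(i)} a \emph{uniform} positive density --- the density $\delta_k$ of this set of primes satisfies $\delta_k\ge\delta>0$ with $\delta$ independent of $k$ --- which reduces to showing that the Galois group of $R_k$ (or of a suitable irreducible factor) is large enough that a bounded-below proportion of its elements have a fixed point on the roots (e.g.\ that it contains the relevant alternating group, whereupon this proportion tends to $1-1/e$), and that this persists after intersecting with the inertia condition in $\mathbb{Q}(\sqrt{-D})$; and \emph{(ii)} an effective Chebotarev estimate yielding this count as $\big(\tfrac12\delta_k+o(1)\big)\pi(x)$ with an error that is $o(\pi(x))$ \emph{uniformly for $k\le K$}.

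The admissible range of $k$, and hence the function appearing in the theorem, is governed by step (ii). Unconditionally, the contribution of a possible Landau--Siegel zero of $\zeta_{L_k}$ can be absorbed into an $o(\pi(x))$ error only if $|\mathrm{disc}\,L_k|\le(\log x)^{2-\varepsilon}$, by Siegel's (ineffective) lower bound for such a zero. Since $[L_k:\mathbb{Q}]\le2\,(k\deg P)!$ and the primes ramifying in $L_k$ divide the explicitly bounded discriminant of $R_k$, one gets $\log|\mathrm{disc}\,L_k|\ll(k\deg P)!\,(k\deg P)^2\log(k\deg P)$, so the constraint reads $(k\deg P)!\ll\log\log x$, i.e.\ $k\ll\frac{\log\log\log x}{\log\log\log\log x}$ (recall $\deg P$ is fixed). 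Taking $K$ of this order gives $\#\{p\le x:\ p\ \mathrm{inert},\ A_k(p)\ge1\}\gg\pi(x)$ for every $k\le K$; summing over $k$ and inserting into $p-G_P(p)\ge\#\mathcal K(p)-1$ yields $\frac1{\pi(x)}\sum_{p\le x,\,p\ \mathrm{inert}}(p-G_P(p))\gg K\asymp\frac{\log\log\log x}{\log\log\log\log x}$, as claimed. I expect the two uniformity statements to be the main obstacles --- the Galois-theoretic input behind (i), and the interplay of a potential Landau--Siegel zero with the super-exponentially growing field $L_k$ in (ii), the latter being precisely what produces the shape of the bound.
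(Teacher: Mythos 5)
Your outline correctly identifies the right auxiliary objects (the shifted products $\prod_{j}P(n+j)-1$ and an effective Chebotarev statement with a Siegel-zero discussion), but two load-bearing steps are genuinely broken, and a third is precisely the difficulty the paper's argument is built to avoid. \emph{First}, the combinatorial lemma $p-G_P(p)\ge\#\mathcal K(p)-1$ is false: your claim that $\#\mathcal K_v$ pairs with pairwise distinct gaps must cover at least $\#\mathcal K_v$ points fails for Sidon-type configurations. For instance the four indices $\{0,1,3,7\}$ support six pairs realizing the six distinct gaps $1,2,3,4,6,7$; if $F_P$ took a common value exactly at these four indices one would have $\#\mathcal K(p)\ge 6$ while these indices contribute only $3$ to $p-G_P(p)$. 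In general $m$ distinct pairs need only about $\sqrt{2m}$ supporting points, so what survives is $\mu_v\gg\sqrt{\#\mathcal K_v}$ and hence only $p-G_P(p)\gg\sqrt{\#\mathcal K(p)}$, a square-root loss in the final bound. \emph{Second}, your input (i) --- a density $\delta_k\ge\delta>0$ of good primes, uniform in $k$ --- is not proved and is the genuinely hard point. For a transitive permutation group on $m=k\deg P$ letters, Burnside's lemma only guarantees that a proportion $\ge 1/m$ of elements have a fixed point, and this is sharp (regular actions); establishing that $\mathrm{Gal}(R_k)$ is symmetric or alternating uniformly in $k$ is not known for these polynomials. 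Without it your per-$k$ density degrades to $\asymp 1/k$, and in your admissible range $K\ll\log\log\log x/\log\log\log\log x$ the method then returns only $\sum_{k\le K}1/k\asymp\log K\asymp\log\log\log\log x$, far short of the theorem.

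That admissible range is itself an artifact of working in the full splitting field $L_k$, whose degree can be as large as $2\,(k\deg P)!$ and whose discriminant you can only bound superexponentially. The paper never goes there: it fixes one root $\beta_n$ of $f_n$, works in the degree-$2n$ field $K_n=\mathbb Q(\beta_n)$, and applies effective Chebotarev only in the \emph{quadratic} extension $K_n(\alpha_1)/K_n$, translating degree-one primes of $K_n$ into roots of $f_n$ modulo $p$ via Dedekind's theorem and the non-splitting of $\mathfrak p$ in $K_n(\alpha_1)$ into ``$P$ has no root mod $p$''. This keeps $\log d_{K_n(\alpha_1)}\ll n^2\log n$ (see (\ref{discboundtr})), so Stark's effective bound on the exceptional zero permits the length parameter to go up to $\log\log x/\log\log\log x$; the unavoidable factor $1/(2n)$ (at most $2n$ degree-one primes above a given $p$), which is the honest analogue of your $1/m$ fixed-point density, is then compensated by summing $1/(4n)$ over this much longer range to get $\gg\log N$. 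The density $1/2$ of the non-split condition rests on a single algebraic fact, $\mathbb Q(\beta_n)\cap Spl(P)=\mathbb Q$ (Corollary \ref{corimag}), and this is where the imaginary-quadratic hypothesis enters essentially: the finiteness of the unit group of $\mathcal O_{Spl(P)}$ forces $f_n$ to be irreducible over $\mathcal O_{Spl(P)}[x]$ (Proposition \ref{unitsimag}). Your sketch uses that hypothesis only for the inert/split dichotomy, which is a further sign that, as written, the argument cannot close.
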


Theorem~\ref{average} directly implies:
\begin{corollary}\label{inf}
 There exists infinitely many primes $p$ such that $P$ does not have a root modulo $p$ and 
\[p-G_{P}(p)\gg \frac{\log \log \log p}{\log \log \log \log p}. \] 
\end{corollary}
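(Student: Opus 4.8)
The plan is to derive Corollary~\ref{inf} from Theorem~\ref{average} by the standard ``an average this large forces infinitely many large values'' argument. Set $L(x)=\frac{\log\log\log x}{\log\log\log\log x}$ and let $\mathcal P$ denote the set of primes $p$ for which $P$ has no root modulo $p$. Two elementary inputs will be used repeatedly. First, $0\le p-G_P(p)\le p$ for every prime $p$, since $F_P(n)\pmod p$ takes at most $p$ values; in particular every summand in Theorem~\ref{average} is nonnegative. Second, $L$ is eventually increasing: writing $u=\log\log\log x$ one has $L=u/\log u$, whose derivative $(\log u-1)/(\log u)^2$ is positive as soon as $u>e$, so $L$ increases for all $x$ beyond some absolute constant.

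Next I would argue by contradiction, fixing the constant $c=c(P)>0$ supplied by Theorem~\ref{average}, so that $\sum_{p\le x,\ p\in\mathcal P}\bigl(p-G_P(p)\bigr)\ge c\,L(x)\,\pi(x)$ for all large $x$. If Corollary~\ref{inf} fails then, in particular, only finitely many $p\in\mathcal P$ satisfy $p-G_P(p)>\tfrac{c}{2}\,L(p)$; choose $x_0$ large enough that $p-G_P(p)\le\tfrac{c}{2}\,L(p)$ for every $p\in\mathcal P$ with $p>x_0$, and also large enough that $L$ is increasing on $[x_0,\infty)$. Splitting the sum at $x_0$, bounding the terms with $p\le x_0$ crudely by $\pi(x_0)\,x_0=O_{x_0}(1)$, and using $L(p)\le L(x)$ together with $\#\{p\le x:\ p\in\mathcal P\}\le\pi(x)$, one obtains for all large $x$
\[
c\,L(x)\,\pi(x)\;\le\;\sum_{\substack{p\le x\\ p\in\mathcal P}}\bigl(p-G_P(p)\bigr)\;\le\;O_{x_0}(1)+\tfrac{c}{2}\,L(x)\,\pi(x),
\]
that is, $\tfrac{c}{2}\,L(x)\,\pi(x)\le O_{x_0}(1)$, which is absurd since $L(x)\pi(x)\to\infty$. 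Hence $p-G_P(p)>\tfrac{c}{2}\,L(p)$ for infinitely many $p\in\mathcal P$, which is precisely the assertion $p-G_P(p)\gg\frac{\log\log\log p}{\log\log\log\log p}$. (The same lower bound forces $\mathcal P$ to be infinite in the first place, so the class of primes considered is nonempty.)

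I do not expect any genuine obstacle in this deduction: the entire difficulty is concentrated in Theorem~\ref{average}, which is proved in the final section. The only point that calls for a moment's care is the monotonicity of $L$, needed to replace $L(p)$ by $L(x)$ in the tail of the sum; everything else is elementary manipulation of nonnegative quantities together with the trivial bounds $G_P(p)\le p$ and $\#\{p\le x:\ p\in\mathcal P\}\le\pi(x)$.
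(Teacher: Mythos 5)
Your proposal is correct and is exactly the deduction the paper intends: the authors state that Theorem~\ref{average} ``directly implies'' Corollary~\ref{inf} without writing it out, and your contradiction argument (using the nonnegativity of $p-G_P(p)$, the eventual monotonicity of $\frac{\log\log\log x}{\log\log\log\log x}$, and the trivial bound on the finitely many exceptional terms) is the standard way to pass from the average lower bound to infinitely many large values.
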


Assuming Generalized Riemann Hypothesis (GRH) the bound from Theorem ~\ref{average} can be improved.  
  \begin{theorem}\label{GRH} 
Assume that GRH is true. Then, 
 
 $$\frac{1}{\pi(x)}\sum_{p\leq x \atop \text{P has no root mod p}} \left(p-G_{P}(p)\right) \gg \log x.$$
 \end{theorem}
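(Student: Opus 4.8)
\emph{Plan of proof.} Write $K=\mathrm{Spl}(P)$; the condition ``$P$ has no root mod $p$'' holds precisely when $p$ is inert in $K$, which (as $K$ is imaginary quadratic) happens for a set of primes of density $\tfrac12$, and for such $p$ every $F_P(n)$ is a unit mod $p$. The overall strategy is the same as for Theorem~\ref{average}; GRH enters only to extend the range in which an effective Chebotarev estimate is available, and it is that extension that upgrades the bound from $\log\log\log x/\log\log\log\log x$ to $\log x$.

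\textbf{Step 1 (missing values from short collisions).} For $1\le m<m+\ell\le p$ one has $F_P(m)\equiv F_P(m+\ell)\pmod p$ exactly when $p\mid R_\ell(m)-1$, where $R_\ell(x):=\prod_{t=1}^{\ell}P(x+t)\in\mathbb Z[x]$. Collecting such coincidences by the repeated index $n=m+\ell$ and controlling the multiplicity $r_{\max}(p):=\max_{a}\#\{i\le p:\ F_P(i)\equiv a\ (\bmod\ p)\}$ — a standard second moment estimate on the partial products $\sum_{i\le H}\chi(F_P(i))$ shows $r_{\max}(p)=O_P(1)$ for all but $o(\pi(x))$ primes $p\le x$ — yields, for these primes,
\[
p-G_P(p)\ \gg_P\ \sum_{\ell\ge 1}\ \#\bigl\{m\ge 1:\ m+\ell<p,\ p\mid R_\ell(m)-1\bigr\}.
\]
Here the imaginary-quadratic hypothesis also guarantees that the exceptional primes are genuinely negligible and that the relevant number fields stay under control; note moreover the explicit identity $F_P(p)\equiv -\mathrm{disc}(P)\pmod p$ (up to the leading coefficient of $P$) for $p$ inert, which streamlines the tail estimates.

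\textbf{Step 2 (averaging and interchange).} Summing over primes $p\le x$ inert in $K$ and swapping the order of summation,
\[
\sum_{\substack{p\le x\\ p\ \mathrm{inert}\ \mathrm{in}\ K}}\bigl(p-G_P(p)\bigr)\ \gg_P\ \sum_{\ell\ge 1}\ \sum_{m\ge 1}\ \#\bigl\{p\le x:\ p\ \mathrm{inert},\ m+\ell<p,\ p\mid R_\ell(m)-1\bigr\}.
\]
For each fixed pair $(\ell,m)$ the inner count records the prime divisors of the fixed integer $R_\ell(m)-1$ that lie in $(m+\ell,x]$ and are inert in $K$. Re-summing by the prime, for each fixed $\ell$ the total is, up to constants, $\sum_{p\in(x/2,x]}^{\ \mathrm{inert}}\#\{m\in[x/2,p):\ R_\ell(m)\equiv 1\ (\bmod\ p)\}$. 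By equidistribution of the roots of a polynomial congruence in intervals (Hooley; Duke--Friedlander--Iwaniec) the inner count is $\asymp\rho_\ell(p)$, where $\rho_\ell(p)$ is the number of roots of $R_\ell(x)-1$ modulo $p$, and $\sum_{p\in(x/2,x]}^{\ \mathrm{inert}}\rho_\ell(p)\gg\pi(x)$ by the prime ideal theorem for the compositum of $K$ with the splitting field of $R_\ell(x)-1$. Since $\deg(R_\ell(x)-1)=\ell\deg P$ grows, this last lower bound needs an \emph{effective} Chebotarev density theorem with error term $\ll x^{1/2+o(1)}$, which is exactly what GRH supplies and which remains valid for $\ell$ in a range $[1,L(x)]$ with $L(x)\to\infty$ fast enough that $\sum_{\ell\le L(x)}\pi(x)\gg (\log x)\,\pi(x)$; unconditionally one only obtains a far shorter range, producing the weaker bound of Theorem~\ref{average}. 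Combining, $\sum_{p\le x,\ \mathrm{inert}}(p-G_P(p))\gg_P x$, and dividing by $\pi(x)$ gives the theorem.

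\textbf{Main obstacle.} The crux is the last estimate: controlling, \emph{uniformly} over $\ell$ in as long a range as possible, the number of primes $p\le x$ that are inert in $K$ and for which $R_\ell(x)-1$ has a root mod $p$, together with the equidistribution of those roots. This is precisely the effective-Chebotarev input that GRH makes available in the required range; pinning down how far $\ell$ may be pushed — and hence recovering the full $\log x$ rather than a slightly smaller power of logarithms — is the delicate point, and it is where both GRH and the imaginary-quadratic structure of $P$ are used.
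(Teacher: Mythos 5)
Your overall architecture matches the paper's: collisions $F_P(m)\equiv F_P(m+\ell)\pmod p$ come from roots of $R_\ell(x)-1$ (the paper's $f_\ell$) modulo $p$, one averages over $\ell$ up to a threshold, and GRH enters only through the effective Chebotarev error term $O\left(x^{1/2}(\log d_{K_\ell(\alpha_1)}+\ell\log x)\right)$, which lets the threshold grow like a power of $x$ and upgrades the unconditional triple-logarithm to $\log x$. However, there is a genuine gap at the step you dismiss with ``the prime ideal theorem for the compositum of $K$ with the splitting field of $R_\ell(x)-1$''. To conclude that a positive proportion of primes are simultaneously inert in $K$ and admit a root of $R_\ell-1$, you must know that these two splitting conditions are independent, i.e.\ that the Galois group of the compositum contains elements acting nontrivially on $K$ while fixing a root of $R_\ell-1$. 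This can fail: if $K\subseteq\mathbb{Q}(\beta_\ell)$ for every root $\beta_\ell$ of every irreducible factor of $R_\ell-1$, then every prime modulo which $R_\ell-1$ has a root splits in $K$, and your inner sum over inert primes is empty. The paper's entire algebraic section exists to rule this out: Proposition \ref{unitsimag} shows $f_\ell$ is irreducible over $\mathcal{O}_{K}[x]$ (this is exactly where the imaginary quadratic hypothesis, via finiteness of the unit group, is used), whence $\mathbb{Q}(\beta_\ell)\cap K=\mathbb{Q}$ (Corollary \ref{corimag}), and Chebotarev is then applied in the quadratic extension $K_\ell(\alpha_1)/K_\ell$ rather than in the compositum of splitting fields. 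Your proposal contains no substitute for this input. You also omit the discriminant bound (Lemma \ref{distance} and (\ref{discboundtr})), without which the term $x^{1/2}\log d_{K_\ell(\alpha_1)}$ in the GRH error cannot be controlled uniformly in $\ell$ up to the required range $N\ll x^{1/6}$.

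Two further steps are shaky. First, your claim that a ``second moment estimate'' gives $r_{\max}(p)=O_P(1)$ for almost all $p$ is unsubstantiated and not needed: a second moment controls average multiplicity, not the maximum, and bounding the maximal multiplicity of $F_P(n)\bmod p$ is not a standard fact. The paper sidesteps this by the cruder device of dividing by the degree $2\ell$ of $K_\ell$ (inequality (\ref{ineqroots})), which costs only a harmonic factor and still yields $\sum_{\ell\le N}1/\ell\asymp\log N\gg\log x$ once $N$ is a small power of $x$. Second, the appeal to equidistribution of polynomial roots in short intervals (Hooley, Duke--Friedlander--Iwaniec) is both unnecessary and unavailable: in your own Step 1 the variable $m$ runs over essentially a full residue system modulo $p$ (the constraint is $1\le m<p-\ell$), so the number of solutions of $R_\ell(m)\equiv1\pmod p$ is $\rho_\ell(p)+O(\ell)$ with no equidistribution input at all; and the cited results concern roots of quadratic polynomials on average over the modulus, not a fixed polynomial of degree $\ell\deg P$ modulo a single prime.
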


As before, Theorem~\ref{GRH} directly implies:
\begin{corollary}\label{GRHinf}
Assume that GRH is true. There exists infinitely many primes $p$ such that $P$ does not have a root modulo $p$ and 
\[p-G_{P}(p)\gg \log p. \] 
\end{corollary}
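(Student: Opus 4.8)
The plan is to deduce Theorem~\ref{GRH} from Theorem~\ref{average} by the same mechanism, simply replacing the unconditional input with its GRH-strengthened counterpart. First I would recall the structure of the proof of Theorem~\ref{average}: for an imaginary quadratic $P$, the products $F_P(n) \pmod p$ are governed by the multiplicative order (or the orbit length) of the image of $P$-values in $(\mathbb{Z}/p\mathbb{Z})^*$, and one produces many missing values by showing that for a positive-density set of primes $p$ (those for which $P$ has no root mod $p$, i.e.\ $p$ remains inert/splits appropriately in $Spl(P)$) the sequence $F_P(n)$ is trapped, for $n$ up to some threshold $T(p)$, inside a subgroup or coset structure of small size, so that $p - G_P(p) \gg T(p)$. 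The quantity $T(p)$ is controlled by a gap between consecutive primes lying in a prescribed splitting class modulo the relevant modulus, or more precisely by the least prime (or least admissible parameter) in an arithmetic-progression-type condition attached to $Spl(P)$. Unconditionally one only gets $T(p) \gg \frac{\log\log\log p}{\log\log\log\log p}$ from the available Linnik-type / Chebotarev-with-error-term estimates; under GRH the effective Chebotarev density theorem gives a power-saving error term, which upgrades the controllable range to $T(p) \gg \log p$.

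Concretely, the key steps in order would be: (1) Re-examine where in the proof of Theorem~\ref{average} an unconditional prime-counting or Chebotarev input is used; isolate the exact statement invoked — typically something of the shape ``there is a prime $q \le Q$ with $q$ in a given Frobenius class modulo $\operatorname{disc}(Spl(P))$ and $q \nmid$ [some fixed quantity]'', where $Q$ is the bottleneck. (2) Replace that input with the GRH-conditional effective Chebotarev density theorem (Lagarias--Odlyzko, or the Serre refinement), which yields such a prime $q$ with $Q \ll (\log p)^{2}$ or better in the relevant ranges, and more importantly gives enough such primes / enough room that the orbit-trapping argument runs with $T(p) \gg \log p$. (3) Re-run the averaging over $p \le x$ with $P$ having no root mod $p$ exactly as before: the density of such primes is a fixed positive constant $\delta > 0$ by Chebotarev for the splitting field, so $\frac{1}{\pi(x)}\sum_{p \le x,\ P(\cdot)\not\equiv 0} (p - G_P(p)) \gg \delta \cdot \log x \gg \log x$. (4) Finally, Corollary~\ref{GRHinf} follows immediately: an average lower bound of $\log x$ forces infinitely many $p$ with $p - G_P(p) \gg \log p$, since the trivial bound $p - G_P(p) \le p$ cannot make up an $x\log x$ total deficit from a sparse set.

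The main obstacle I expect is step (2): making sure the GRH input is inserted at the right place and that the trapping argument is genuinely \emph{uniform} enough to convert the improved bound on the auxiliary prime $q$ into the claimed $T(p) \gg \log p$ for almost all $p$ — not merely for a sparse subsequence. In the unconditional argument one likely loses several logarithms both from the weak Chebotarev error term and from a union bound over excluded primes; under GRH the error term is essentially square-root size, so one must verify that the union bound / sieving over bad $p$ still leaves a positive proportion of $p \le x$ with the full range $T(p) \gg \log p$ available, rather than, say, $T(p) \gg \log p / \log\log p$. I would also need to double-check that the imaginary-quadratic hypothesis on $Spl(P)$ is used only to guarantee the relevant Frobenius class is nonempty and of positive density (so that ``$P$ has no root mod $p$'' is compatible with the trapping), and that nothing in the GRH upgrade secretly requires $P$ to be, say, a power of a linear polynomial; assuming it does not, the deduction is otherwise routine. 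A short remark contrasting this with the factorial case $P(x) = x$ of~\cite{KLM1} would round out the argument.
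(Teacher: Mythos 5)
The part of your proposal that actually proves Corollary~\ref{GRHinf} is step (4), and it is correct and is exactly what the paper does: the corollary is presented as a direct consequence of Theorem~\ref{GRH}. Spelled out, if only finitely many primes (with $P$ having no root mod $p$) satisfied $p-G_P(p)\ge c'\log p$, those primes would contribute $O(1)$ to $\sum_{p\le x}(p-G_P(p))$, while the remaining primes in the sum would contribute at most $c'\pi(x)\log x$, so the average would be at most $c'\log x+o(\log x)$; taking $c'$ smaller than the implied constant in Theorem~\ref{GRH} gives a contradiction, and the primes produced automatically lie in the set over which the theorem averages, so they satisfy the ``no root mod $p$'' condition. (Minor slip: the total is $\asymp\pi(x)\log x\asymp x$, not $x\log x$, but this is immaterial.)

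That said, the bulk of your write-up, steps (1)--(3), reconstructs the proof of Theorem~\ref{GRH} by a mechanism that is not the one in the paper. There is no ``orbit trapping inside a subgroup or coset of small size'' and no input of the shape ``least prime in a prescribed Frobenius class''; the threshold is not governed by gaps between primes in a splitting class. The paper's argument is a collision count: a root $t_0$ of $f_n(t)=P(t)P(t+1)\cdots P(t+n-1)-1$ modulo $p$ forces $F_P(t_0+n-1)\equiv F_P(t_0-1)\pmod p$, hence each distinct root of $f_n$ mod $p$ contributes one missing value. One then lower-bounds $\sum_{n\le N}\frac{1}{\pi(x)}\sum_p\rho_n(p)$ by effective Chebotarev in the relative extension $K_n(\alpha_1)/K_n$, where $K_n=\mathbb{Q}(\beta_n)$ and $\beta_n$ is a root of $f_n$, obtaining roughly $1/(4n)$ for each $n$ and hence $\gg\log N$ in total; under GRH the error term permits $N$ as large as $x^{1/6}$, whence $\log N\gg\log x$. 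Since Theorem~\ref{GRH} is a stated result you may cite as a black box, this mischaracterization does not invalidate your proof of the corollary, but the mechanism you describe is not the one that makes the theorem true.
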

We prove analogue results for binomials of any degree which, in some sense, generalizes the case of imaginary quadratic polynomials. Even though the method is essentially similar, the algebraic properties of the polynomials involved and their utilization in the proof are different.
 \begin{theorem}\label{averageBinomial}
Let $P(x)=x^d-a\in\mathbb{Z}[x]$ with $d$ coprime to $a$ and $a\neq \pm 1$ squarefree. Then
 
 $$\frac{1}{\pi(x)}\sum_{p\leq x\atop \text{P has no root mod p}} \left(p-G_{P}(p)\right) \gg_{d,a} \frac{\log\log \log x}{\log\log\log\log x}.$$
 \end{theorem}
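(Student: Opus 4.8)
The plan is to run the argument behind Theorem~\ref{average}, with the imaginary quadratic field replaced by $L:=\mathbb{Q}(\theta)$, $\theta:=a^{1/d}$. Since $a$ is squarefree and $a\neq\pm1$, Capelli's criterion gives that $x^d-a$ is irreducible over $\mathbb{Q}$, so $[L:\mathbb{Q}]=d$ and, as polynomials, $x^d-a=N_{L/\mathbb{Q}}(x-\theta)$; hence
\[
F_P(n)=N_{L/\mathbb{Q}}\!\Bigl(\prod_{i=1}^{n}(i-\theta)\Bigr).
\]
Fix a prime $q\mid d$. For a prime $p$ outside a fixed finite set with $q\mid p-1$ and $a$ a $q$-th power non-residue mod $p$, already $x^q-a$, and hence $x^d-a$, has no root mod $p$; by Chebotarev in $\mathbb{Q}(\zeta_q,a^{1/q})$ such primes have density $1/q>0$. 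For these $p$ one uses two binomial-specific facts: the invariance $P(\eta i)\equiv P(i)\pmod p$ for every $q$-th root of unity $\eta$ mod $p$ — so $P$ is constant on the $\langle\eta\rangle$-orbits of $\mathbb{F}_p^{\times}$ and $\prod_{i=1}^{p-1}P(i)$ is a $q$-th power in $\mathbb{F}_p^{\times}$ — and an explicit description of the factorization type of $x^d-a$ mod $p$ (the hypothesis $\gcd(d,a)=1$ keeps the ramified primes under control), which identifies the residue ring $\mathcal{O}_L/p\mathcal{O}_L$ and its norm map to $\mathbb{F}_p$.

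The combinatorial heart is to produce many repetitions in the orbit $n\mapsto F_P(n)\pmod p$. A repetition $F_P(n)\equiv F_P(m)\pmod p$ with $1\le n<m\le p$ is exactly $\prod_{i=n+1}^{m}P(i)\equiv1\pmod p$, i.e.\ $\prod_{i=n+1}^{m}(i-\theta)$ lying in the kernel of the norm on $\mathcal{O}_L/p\mathcal{O}_L$. For each prime $r$ in a range $r\le Y$ (and coprime to $d\cdot\mathrm{disc}(L)$) I would produce an interval $I_r=[n_r+1,m_r]\subseteq[1,p]$ with $\prod_{i\in I_r}P(i)\equiv1\pmod p$ under a splitting condition on $p$ — for instance $r\mid p-1$ together with prescribed $r$-th power residue behaviour of $a$ and of a suitable auxiliary algebraic integer of $L$ — arranged so that (i) the condition holds for a proportion $\gg_{d,a}1$ of the primes $p\le x$ with no root of $P$, and (ii) the repetitions coming from distinct $r$ are themselves distinct. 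One then obtains, for those $p$,
\[
p-G_P(p)\ \gg\ \#\{r\le Y : p \text{ satisfies the }r\text{-th condition}\},
\]
which already beats the trivial bound $p-G_P(p)\ge1$ (valid always, since $F_P(1),\dots,F_P(p)\in\mathbb{F}_p^{\times}$ and $|\mathbb{F}_p^{\times}|=p-1$). The $\eta$-invariance and the factorization type of $x^d-a$ are precisely what allow one to write these interval products in closed form and to keep the lengths $m_r-n_r$ controlled.

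Finally one sums over $p\le x$. Averaging the displayed bound and feeding in the unconditional effective Chebotarev density theorem (the possible Siegel zero absorbed into the implied constant) for the compositum $K_Y$ of the fields entering the conditions with $r\le Y$, one gets
\[
\frac{1}{\pi(x)}\sum_{\substack{p\le x\\ P\text{ has no root mod }p}}\bigl(p-G_P(p)\bigr)\ \gg_{d,a}\ \pi(Y),
\]
provided $x$ is large compared with $K_Y$. Because the "cost'' of using $r$ (the length of $I_r$, and the degree and discriminant of the relevant field) grows rapidly with $r$, effective Chebotarev lets one push $Y$ only as far as $Y\asymp\log\log\log x$ unconditionally, whence $\pi(Y)\asymp\frac{\log\log\log x}{\log\log\log\log x}$ and the theorem follows; under GRH the Chebotarev input is far cheaper, $Y$ can be taken $\asymp\log x$, and one recovers the analogue of Theorem~\ref{GRH}. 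All implied constants depend only on $d$ and $a$.

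The main obstacle is the algebraic step: turning an abstract splitting condition on $p$ into an \emph{explicit, short} interval whose $P$-product is $\equiv1\pmod p$, and verifying that the induced repetitions are pairwise distinct. In the imaginary quadratic case the residue ring is simply $\mathbb{F}_{p^2}$ with norm $\gamma\mapsto\gamma^{p+1}$, which makes this transparent; for $x^d-a$ one must instead work inside the generally non-Galois field $\mathbb{Q}(a^{1/d})$, using its ramification, its unit and ideal-class data, and the finer factorization of $x^d-a$ modulo $p$ — exactly the difference flagged in the paragraph preceding the statement. A secondary issue is keeping all error terms uniform and checking that each interval $I_r$ genuinely fits inside $[1,p]$.
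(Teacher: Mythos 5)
Your proposal correctly identifies the quantity to produce (collisions $\prod_{i=n+1}^{m}P(i)\equiv 1\pmod p$, each forcing a missing value of $F_P$ modulo $p$), the right global scheme (one auxiliary Chebotarev condition per parameter value, with a cost that forces the parameter range $Y\asymp\log\log\log x$ unconditionally), and even the correct final tower of logarithms. But the step you yourself flag as ``the main obstacle'' --- producing, for each $r$, an explicit interval $I_r\subseteq[1,p]$ with $\prod_{i\in I_r}P(i)\equiv 1\pmod p$ under a splitting condition of positive density --- is the entire content of the theorem, and the tools you offer do not supply it. The invariance $P(\eta i)=P(i)$ for $\eta^d=1$ makes $P$ constant on multiplicative orbits, but those orbits are not intervals of consecutive integers, so it says nothing about interval products; likewise the norm-form identity $F_P(n)=N_{L/\mathbb{Q}}\bigl(\prod_{i}(i-\theta)\bigr)$ describes the full product, not where inside $[1,p]$ a sub-product equals $1$. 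No mechanism is given that converts ``$r\mid p-1$ plus prescribed power-residue behaviour of $a$'' into a specific pair $n_r<m_r$, nor is the pairwise distinctness of the resulting repetitions addressed.

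The paper's device, which is the missing idea, is to let the interval itself be the object that Chebotarev detects: set $f_n(x)=P(x)P(x+1)\cdots P(x+n-1)-1$; a root $t_0$ of $f_n$ modulo $p$ is precisely an interval of length $n$ whose $P$-product is $1\pmod p$, i.e.\ $F_P(t_0+n-1)\equiv F_P(t_0-1)\pmod p$. One then applies the effective Chebotarev theorem in the extension $K_n\cdot Spl(P)/K_n$, where $K_n=\mathbb{Q}(\beta_n)$ and $\beta_n$ is a root of (an irreducible factor of) $f_n$: degree-one primes $\mathfrak p$ of $K_n$ whose Frobenius acts on the roots of $P$ without fixed points yield rational primes $p$ for which $f_n$ has a root mod $p$ \emph{and} $P$ has none. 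The binomial-specific input is then not the factorization type of $x^d-a$ modulo $p$ but Proposition \ref{indfieldsbinom}: restricting to the subfamily $f_{kq}$ with $q$ the smallest prime factor of $a$ and $(k,q)=1$, and using the discriminant computation of Lemma \ref{discshifts} to show that $x^d-a$ remains irreducible over $K_{kq}$. That irreducibility is exactly the ``independence'' of the two splitting conditions which your sketch assumes implicitly when asserting that a proportion $\gg_{d,a}1$ of the relevant primes satisfies both. Without an argument of this kind, or a genuinely explicit construction of the intervals $I_r$ (which you do not give), the proposal does not constitute a proof.
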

 
Again, Theorem~\ref{averageBinomial} directly implies:
\begin{corollary}\label{infBinomial}
 There exists infinitely many primes $p$ such that $P$ does not have a root modulo $p$ and 
\[p-G_{P}(p)\gg_{d,a} \frac{\log \log \log p}{\log \log \log \log p}. \] 
\end{corollary}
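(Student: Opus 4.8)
The plan is to deduce Corollary~\ref{infBinomial} directly from Theorem~\ref{averageBinomial} by a standard averaging argument, exploiting the fact that $p-G_P(p)\ge 0$ for every prime $p$ (since $F_P(n)$ takes at most $p$ distinct residues modulo $p$). Write $f(x)=\frac{\log\log\log x}{\log\log\log\log x}$, and note that, setting $u=\log\log\log x$, one has $f(x)=u/\log u$, which is eventually increasing and tends to infinity; in particular $f(p)\le f(x)$ whenever $p\le x$ and $x$ is large. Let $A>0$ denote the implied constant furnished by Theorem~\ref{averageBinomial}, so that for all sufficiently large $x$,
\[
\sum_{\substack{p\le x\\ P\text{ has no root mod }p}}\bigl(p-G_P(p)\bigr)\ \ge\ A\,f(x)\,\pi(x).
\]

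Next I would argue by contradiction. Suppose the conclusion of the corollary fails with the constant $C=A/2$; that is, only finitely many primes $p$ at which $P$ has no root satisfy $p-G_P(p)\ge C f(p)$. Then there is an $x_0$ so that every such prime $p>x_0$ obeys $p-G_P(p)<Cf(p)\le Cf(x)$ for $p\le x$. Splitting the sum at $x_0$ gives
\[
\sum_{\substack{p\le x\\ P\text{ has no root mod }p}}\bigl(p-G_P(p)\bigr)\ \le\ B+Cf(x)\,\pi(x),
\]
where $B=\sum_{p\le x_0}\bigl(p-G_P(p)\bigr)$ is a fixed constant. Combining the two displays and dividing by $f(x)\pi(x)$ yields $A\le B/\bigl(f(x)\pi(x)\bigr)+C$; letting $x\to\infty$ forces $A\le C=A/2$, a contradiction. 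Hence infinitely many primes $p$ at which $P$ has no root modulo $p$ satisfy $p-G_P(p)\ge (A/2)f(p)$, which is precisely the asserted bound.

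There is essentially no genuine obstacle here beyond bookkeeping: the entire content of the statement lies in Theorem~\ref{averageBinomial}, and the corollary is its immediate pointwise shadow. The only points requiring a moment of care are the nonnegativity of $p-G_P(p)$ (so that discarding the contribution of the primes below $x_0$ only decreases the sum) and the eventual monotonicity of $f$ (so that $f(p)\le f(x)$ may be invoked in the upper bound); both are elementary. The dependence of the implied constant on $d$ and $a$ is inherited verbatim from that of Theorem~\ref{averageBinomial}.
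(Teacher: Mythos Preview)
Your argument is correct and is exactly the standard averaging deduction the paper has in mind when it states that Theorem~\ref{averageBinomial} ``directly implies'' the corollary; the paper offers no further details. The only additions you supply---nonnegativity of $p-G_P(p)$ and eventual monotonicity of $f$---are precisely the routine checks needed, so there is nothing to add or correct.
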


%blabla in degree 2, just adding a root is fine its directly Galois or for the family of trinomials with group A3
% In any case, for poly of degree 3,4 there is always elements in the Galois group without fixed points blabla not possible root modulo 100 percent of primes

%blabla Galois group is known here over Q, see Jacobson Velez 
In Theorem \ref{averageBinomial} and Corollary \ref{infBinomial}, the dependence on $a$ is in fact a dependence on the smallest prime factor of $a$. Furthermore, the condition $d$ coprime to $a$ can be weakened. Finally, akin results under the assumption of the Generalized Riemann Hypothesis can also be deduced.

%\begin{remark} These results are of the same flavor as those obtained in \cite{KLM1} for factorials.\end{remark}
The paper is organized as follows. In Section \ref{averageproof}, we prove the results of Section \ref{averageresults}. We prove the necessary algebraic results concerning a suitable family of shifted polynomials in Section \ref{distribproof} and conclude with the proofs of results announced in Section \ref{distrib}.

\section{Number of solutions of (\ref{key1}) in intervals}\label{averageproof}

We proceed using the square sieve in a different way than in \cite{CillShpar}. In order to prove our theorem, we now collect a few auxiliary lemmas from~\cite{CillShpar} and incorporate an additional combinatorial input. \\

 \subsection{Technical lemmas}

The following lemma can be deduced from the Weil bound and is stated in that way in \cite[Lemma $4$]{CillShpar}.

\begin{lemma}\label{Weil} Let $P(x) \in \mathbb{Z}[x]$ be an arbitrary polynomial with $D=\text{deg }P\ge 2.$ For all primes $l\neq p$ such that $P(x)$ is not a perfect square modulo $l$ and $p$, we have

$$ \sum_{n=M+1}^{M+N} \left(\frac{P(n)}{lp}\right) \ll D^2\left(\frac{N}{lp}+1\right) (lp)^{1/2} \log (lp).                 $$

\end{lemma}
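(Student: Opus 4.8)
The plan is to view, for fixed $l\ne p$, the function $f(n)=\left(\frac{P(n)}{lp}\right)$ of $n\in\mathbb{Z}$ as periodic with period $q=lp$ (it is well defined on all of $\mathbb{Z}$, being $0$ at those $n$ with $l\mid P(n)$ or $p\mid P(n)$, and such residues cause no difficulty), and to estimate the incomplete sum over $M+1\le n\le M+N$ by the standard completion technique (P\'olya--Vinogradov type). Expanding $f$ into additive characters modulo $q$ and writing $\hat f(r)=\frac{1}{q}\sum_{a\bmod q} f(a)\,e(-ra/q)$, the geometric-sum bound $\bigl|\sum_{n=M+1}^{M+N} e(rn/q)\bigr|\le \min\!\bigl(N,\|r/q\|^{-1}\bigr)$ gives
\[\Bigl|\sum_{n=M+1}^{M+N} f(n)\Bigr| \le \sum_{r\bmod q} |\hat f(r)|\,\min\!\bigl(N,\|r/q\|^{-1}\bigr),\]
so the lemma reduces to a uniform upper bound for the Fourier coefficients $\hat f(r)$.

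To bound $\hat f(r)$ I would use that $l\ne p$ are distinct primes, so by the Chinese Remainder Theorem $f(a)=\left(\frac{P(a)}{l}\right)\left(\frac{P(a)}{p}\right)$ and $e(-ra/q)$ factors through the two moduli; hence $\hat f(r)$ splits as $\frac{1}{q}$ times a product of two mixed multiplicative--additive character sums, each of the form $\sum_{a\bmod \ell}\chi_\ell(P(a))\,e(ba/\ell)$ for $\ell\in\{l,p\}$, with $\chi_\ell$ the Legendre symbol. The hypothesis that $P$ is not a perfect square modulo $l$ and modulo $p$ is precisely the non-degeneracy needed for $\chi_\ell\circ P$ to define a nontrivial multiplicative character of the function field $\mathbb{F}_\ell(x)$; Weil's bound for such sums then yields $\bigl|\sum_{a\bmod\ell}\chi_\ell(P(a))\,e(ba/\ell)\bigr|\le D\sqrt{\ell}$ (the linear additive twist contributes one extra ramification point at infinity, which is why one gets $D\sqrt\ell$ rather than the $(D-1)\sqrt\ell$ valid when the twist is trivial). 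Multiplying the two estimates gives $|\hat f(r)|\le D^2\sqrt{lp}/q = D^2/\sqrt{lp}$, uniformly in $r$.

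Inserting this into the displayed inequality and using $\sum_{r\bmod q}\min\!\bigl(N,\|r/q\|^{-1}\bigr)\ll N + q\log q$ produces
\[\Bigl|\sum_{n=M+1}^{M+N} f(n)\Bigr| \ll \frac{D^2}{\sqrt{lp}}\bigl(N + lp\log(lp)\bigr) \ll D^2\Bigl(\frac{N}{lp}+1\Bigr)(lp)^{1/2}\log(lp),\]
which is the claim. The only genuinely non-elementary ingredient is Weil's estimate for the mixed character sums, so the point to get right is the bookkeeping of the conductor in that bound — in particular the extra $+1$ from the additive twist — and the verification that the stated hypothesis on $P$ is exactly the non-degeneracy condition it requires; everything else is the routine completion argument. (Alternatively one could split $[M+1,M+N]$ into $\lfloor N/q\rfloor$ complete blocks of length $q$, each handled by the pure Weil bound $\le (D-1)^2\sqrt{lp}$, plus one incomplete block treated by completion, obtaining the same estimate.)
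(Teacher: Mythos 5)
Your completion-plus-Weil argument is correct and is precisely the standard derivation: the paper does not prove this lemma at all but quotes it as Lemma~4 of \cite{CillShpar}, where it is obtained in the same way (complete the incomplete sum modulo $lp$, factor the Jacobi symbol and the additive character via CRT, and apply the Weil bound $\le D\sqrt{\ell}$ to each mixed sum, giving $|\hat f(r)|\le D^2/\sqrt{lp}$ and hence the stated estimate). The one point you rightly flag deserves care: the non-degeneracy Weil actually needs is that $P$ is not a \emph{constant multiple} of a square modulo each of $l$ and $p$ (otherwise the $r=0$ term is of size $N$), so the hypothesis ``$P$ is not a perfect square modulo $l$ and $p$'' must be read in that stronger sense, as it is in the cited source and in the application to the polynomials $F_h$.
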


\begin{remark} The result remains true for linear polynomials and is, in that case, a direct consequence of the P\'{o}lya-Vinogradov inequality. \end{remark}

We recall a consequence of Chebotarev Density Theorem. Denote by $\mathcal{L}_z$ the set of primes in the interval $ [z,2z]$ such that $P(x)$ has no root modulo $l$.

\begin{lemma}\label{Chebo} Let $P(x) \in \mathbb{Z}[x]$ be an irreducible polynomial of degree $D\geq 2$. We have 

$$\# \mathcal{L}_z  = \frac{1}{\kappa}(\pi(2z)-\pi(z)) + O\left(z(\log z)^{-2}\right),$$ where $\kappa \leq \frac{D!}{(D-1)}$ is a positive rational number depending on the polynomial $P(x)$ and  $\pi(x),$ as usual denotes the number of primes $\le x.$ \end{lemma}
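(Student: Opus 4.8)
The plan is to translate the splitting behaviour of $P$ modulo $l$ into the cycle structure of a Frobenius element and then apply an effective form of the Chebotarev Density Theorem. Let $K=\mathrm{Spl}(P)$ be the splitting field of $P$ over $\mathbb{Q}$ and let $G=\mathrm{Gal}(K/\mathbb{Q})$, which we regard as a transitive subgroup of the symmetric group $S_D$ via its action on the $D$ roots of $P$; in particular $|G|\mid D!$. For every prime $l$ unramified in $K$ and not dividing the leading coefficient of $P$ — this excludes only finitely many $l$, which we absorb into the error term — the Dedekind--Frobenius correspondence says that the degrees of the irreducible factors of $P\bmod l$ coincide with the cycle lengths of the Frobenius conjugacy class $\mathrm{Frob}_l\subset G$. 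Consequently $P$ has a root modulo $l$ if and only if $\mathrm{Frob}_l$ has a fixed point, so $l\in\mathcal{L}_z$ precisely when $z\le l\le 2z$ and $\mathrm{Frob}_l$ lies in the conjugation-stable set $G_0\subset G$ of fixed-point-free permutations.

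Next I would pin down the constant. Writing $N_0=|G_0|$, the orbit-counting (Burnside) lemma applied to the transitive action of $G$ gives $\sum_{g\in G}\mathrm{fix}(g)=|G|$. Isolating the identity, which fixes all $D$ roots, and using that every non-identity element outside $G_0$ fixes at least one root, we obtain $|G|\ge D+(|G|-1-N_0)$, hence $N_0\ge D-1$; in particular $N_0\ge 1$ since $D\ge 2$. Setting $\frac{1}{\kappa}=\frac{N_0}{|G|}$, this is a positive rational number and $\kappa=\frac{|G|}{N_0}\le\frac{D!}{D-1}$, as required.

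Finally I would invoke the effective (unconditional) Chebotarev Density Theorem for $K/\mathbb{Q}$: for each conjugacy class $C$ of $G$ one has $\pi_C(x):=\#\{l\le x:\ l\text{ unramified},\ \mathrm{Frob}_l=C\}=\frac{|C|}{|G|}\,\pi(x)+O_K\!\left(x\exp(-c\sqrt{\log x})\right)$ for some $c=c(K)>0$; since $K$ is fixed, the error is $O(x(\log x)^{-2})$. Summing over the conjugacy classes contained in $G_0$ and adding back the $O(1)$ exceptional primes gives $\#\{l\le x:\ P\text{ has no root mod }l\}=\frac{1}{\kappa}\pi(x)+O(x(\log x)^{-2})$. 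Evaluating this at $x=2z$ and at $x=z$ and subtracting yields the stated formula for $\#\mathcal{L}_z$.

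The routine parts are the Frobenius dictionary and the bookkeeping of the finitely many bad primes; the only genuinely substantive inputs are the combinatorial inequality $N_0\ge D-1$ — which is exactly what forces $\kappa\le D!/(D-1)$ — and quoting an effective Chebotarev statement with a log-power saving. I expect the latter, namely making sure the error term is genuinely of size $O(z(\log z)^{-2})$ and not something weaker, to be the main point to handle with care, although for a fixed number field it is classical (Lagarias--Odlyzko, or Serre's account).
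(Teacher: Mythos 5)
Your proof is correct and follows exactly the route the paper intends: the paper states this lemma without proof as "a consequence of Chebotarev Density Theorem," and your derivation (Dedekind--Frobenius dictionary, the Burnside count giving at least $D-1$ fixed-point-free elements and hence $\kappa=|G|/N_0\le D!/(D-1)$, then effective Chebotarev with its $O(x\exp(-c\sqrt{\log x}))$ error absorbed into $O(z(\log z)^{-2})$ for the fixed field $\mathrm{Spl}(P)$) is precisely the standard argument being invoked. No gaps.
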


For an integer $h \geq 1$, we introduce

$$F_{h}(x)=\prod_{n=1}^{h} P(x+n) \in \mathbb{Z}[x].$$In order to apply Weil's inequality, we need to ensure that these particular polynomials are not squares modulo a lot of primes. The next result follows directly from the proof of the main Theorem of \cite{CillShpar}.

\begin{lemma}\label{goodprimes} Let $P(x) \in \mathbb{Z}[x]$ be an irreducible polynomial of degree $D\geq 2$ and $H \geq 1$. There is at most $O\left(H\log H/\log\log H\right)$ primes $p$ such that $F_h$ is a square modulo $p$ for some $0<h\leq H$. \end{lemma}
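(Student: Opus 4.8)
The plan is to detect when $F_h(x)=\prod_{n=1}^h P(x+n)$ becomes a square modulo $p$ by controlling the factorization behaviour of each translate $P(x+n)$ over $\mathbb{F}_p$, and then to bound the total number of bad primes by summing over $h\le H$. The key algebraic input is that since $P$ is irreducible of degree $D$, each shifted polynomial $P(x+n)$ is also irreducible of degree $D$ in $\mathbb{Z}[x]$, and for a prime $p$ not dividing the (fixed, nonzero) discriminant and leading coefficient of $P$, the reduction $P(x+n)\bmod p$ is separable. A product of separable polynomials is a perfect square in $\mathbb{F}_p[x]$ precisely when every irreducible factor occurs with even multiplicity; so $F_h$ is a square mod $p$ only if the translates $P(x+1),\dots,P(x+h)$ pair up into coinciding factors modulo $p$. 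In particular, for $F_h$ to be a square mod $p$, there must exist $1\le n_1<n_2\le h$ such that $P(x+n_1)$ and $P(x+n_2)$ share a common factor modulo $p$, i.e. $p$ divides the resultant $\mathrm{Res}_x\big(P(x+n_1),P(x+n_2)\big)=\mathrm{Res}_x\big(P(x),P(x+(n_2-n_1))\big)$.

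Following the proof of the main theorem of \cite{CillShpar}, I would set $R_m=\mathrm{Res}_x\big(P(x),P(x+m)\big)$ for $1\le m\le H$. Since $P$ is irreducible, $P(x)$ and $P(x+m)$ are coprime in $\mathbb{Q}[x]$ for $m\ne 0$ (a nonzero shift cannot fix an irreducible polynomial of degree $\ge 2$, as it would force equal root sets hence equal polynomials), so $R_m$ is a nonzero integer, and $|R_m|\ll (CH)^{O(D)}$ for an absolute constant $C$ depending on $P$; thus $\log\prod_{m\le H}|R_m|\ll H\log H$. A prime $p$ for which $F_h$ is a square modulo $p$ for some $h\le H$ (and which avoids the finitely many primes dividing $\mathrm{disc}(P)$ and the leading coefficient) must divide $R_m$ for some $1\le m\le H$. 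Hence the number of such primes is at most
$$
\sum_{m\le H}\omega(R_m)\ll \sum_{m\le H}\frac{\log|R_m|}{\log\log|R_m|}\ll \frac{1}{\log\log H}\sum_{m\le H}\log|R_m|\ll\frac{H\log H}{\log\log H},
$$
using the standard bound $\omega(k)\ll \log k/\log\log k$ for the number of distinct prime divisors, together with monotonicity to pull out the $\log\log$ factor (treating small $m$ separately, which contributes $O(H)$).

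The main obstacle is the algebraic step: one must rule out the possibility that $F_h$ is a perfect square modulo $p$ for \emph{combinatorial} reasons that do not force two distinct translates to collide — for instance, if a single $P(x+n)$ were itself a square mod $p$, or if the leading coefficient contributed a non-square constant while the polynomial part was a square, etc. This is handled by excluding the $O(1)$ primes dividing $\mathrm{disc}(P)\cdot(\text{leading coeff})$: for the remaining primes each $P(x+n)\bmod p$ is squarefree of degree $D$, so its contribution to the square/non-square status of $F_h$ is entirely through coincidences between factors of different translates, which is exactly what the resultants $R_m$ record. Since this argument is carried out in \cite{CillShpar}, I would simply invoke it; the content added here is only the bookkeeping that turns "$p\mid R_m$ for some $m\le H$" into the stated count $O(H\log H/\log\log H)$.
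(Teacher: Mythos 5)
Your argument is correct and is essentially the one the paper intends: the paper supplies no proof of its own, deferring to the proof of the main theorem of \cite{CillShpar}, where precisely this mechanism appears --- for $p\nmid \mathrm{disc}(P)\cdot\mathrm{lc}(P)$ each translate $P(x+n)$ is separable mod $p$, so $F_h$ a square mod $p$ forces $p\mid \mathrm{Res}\bigl(P(x),P(x+m)\bigr)$ for some $1\le m<h$, and summing $\omega(R_m)\ll \log H/\log\log H$ over $m\le H$ gives the stated count. Your bookkeeping (nonvanishing of $R_m$ by irreducibility, the bound $\log|R_m|\ll_P\log H$, and the monotonicity of $t\mapsto \log t/\log\log t$) is sound, so nothing is missing.
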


\subsection{Proof of Theorem \ref{averagesquares}}

Fix $H$ as a parameter which will be chosen later. Write $S_d(M,N) = S_1 \cup S_2$ where  $$S_1=\{n\in S_d(M,N), m-n>H \text{ for all  } m>n \in S_d(M,N)\}$$ and let $S_2=S_d(M,N)\backslash S_1$. Clearly we have $\# S_1 \leq N/H$ and we want to upper bound the cardinality of $S_2$ in terms of $H$. First, remark that if $n\in S_2$, this implies that there exists $h\leq H$ such that $n$ and $n+h$ are solutions of the equation $(\ref{squares})$ and it follows easily that $F_h(n)$ is a square. Therefore $\left(\frac{F_h(n)}{l}\right)=1$ for all primes not dividing $F_h(n)$. Particularly, this holds for primes $l$ such that $P$ has no roots modulo $l$. For a parameter $z$ which will be chosen later as well, we can choose such primes using Lemma \ref{Chebo}. Let us denote by $\mathcal{L}_z$ this set of good primes. Using Lemma \ref{goodprimes} we have that for sufficiently large $z$, there is at least half of the primes $l\in L_z$ such that $F_{h}(x) $ is not a perfect square modulo $l$ for every $h\leq H$ ($z$ will be chosen such that $H\log H = o(z/\log z)$). We denote this set of primes by $\mathcal{P}_z$. Thus, we have that for every $n\in S_2$, there exists $h\leq H$ such that $$ \sum_{l\in \mathcal{P}_z} \left(\frac{F_h(n)}{l}\right)=  \# \mathcal{P}_z.$$Hence, averaging over the interval we get

$$ (\# \mathcal{P}_z)^2 \# S_2 \ll \sum_{n=M+1}^{M+N}\sum_{1\leq h\leq H} \left( \sum_{l\in \mathcal{P}_z}\left(\frac{F_h(n)}{l}\right)\right)^2.$$ Separating the diagonal contribution from the nondiagonal terms, we obtain 

$$ (\# \mathcal{P}_z)^2 \# S_2 \ll N H\# \mathcal{P}_z +  \sum_{l,p \in \mathcal{P}_z \atop l\neq p}\sum_{1\leq h\leq H}\sum_{n=M+1}^{M+N}\left(\frac{F_h(n)}{lp}\right). $$ Applying Lemma \ref{Weil}, we derive

$$ \# S_2  \ll \frac{NH}{\# \mathcal{P}_z}  + (\# \mathcal{P}_z)^{-2}\sum_{l,p \in \mathcal{P}_z \atop l\neq p} \sum_{1\leq h\leq H} h^2(lp)^{1/2}\left(\frac{N}{lp}+1\right) \log (lp).$$ It leads to

$$ \# S_2 \ll \frac{NH}{\# \mathcal{P}_z} + H^3 z \left(\frac{N}{z^2}+1\right) \log z.$$ Setting $z=\sqrt{N}$, we end up with $$\# S_2 \ll  \frac{NH(\log N) }{\sqrt{N}} + H^3 \sqrt{N} \log N.$$ Choosing the optimal parameter $H= \frac{N^{1/8}}{(\log N)^{1/4}}$ concludes the proof.

\section{Value distribution of polynomials products modulo $p$}\label{distribproof}
 As was mentioned in the introduction, we are interested in the quantity $G_{P}(p).$ We have a trivial lower bound.\footnote{Improving this bound seems hard to the authors.}
\begin{prop}
Suppose $p$ is a prime that does not divide $P(n)$ for all $n\in\mathbb{Z}.$ Then  $$G_P(p)\ge\sqrt{\frac{p}{\deg P}}\cdot$$ 
\end{prop}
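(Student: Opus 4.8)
The plan is to exploit the elementary identity
\[
P(n)\equiv\frac{F_P(n)}{F_P(n-1)}\pmod p,\qquad 1\le n\le p,
\]
with the convention that $F_P(0)$ is the empty product $1$. The hypothesis enters exactly here: since $p\nmid P(n)$ for every $n$, each $P(i)$, and hence each partial product $F_P(m)$, is a unit modulo $p$, so the quotient makes sense. As $n$ ranges over $\{1,\dots,p\}$ the residue $n\bmod p$ ranges over all of $\mathbb{F}_p$, so the left-hand sides realise the \emph{whole} value set $\mathcal{V}_P:=\{P(x)\bmod p:\ x\in\mathbb{F}_p\}$, whereas on the right both $F_P(n)$ and $F_P(n-1)$ lie in the set $\mathcal{I}\subseteq\mathbb{F}_p^{\times}$ of residues assumed by $F_P$ on $\{0,1,\dots,p\}$. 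Hence
\[
\mathcal{V}_P\ \subseteq\ \mathcal{I}\cdot\mathcal{I}^{-1}:=\{uv^{-1}:\ u,v\in\mathcal{I}\}.
\]

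Next I would sandwich this inclusion between two one-line bounds. From above: the map $(u,v)\mapsto uv^{-1}$ surjects $\mathcal{I}\times\mathcal{I}$ onto $\mathcal{I}\cdot\mathcal{I}^{-1}$, so $|\mathcal{I}\cdot\mathcal{I}^{-1}|\le|\mathcal{I}|^{2}$, while $|\mathcal{I}|\le G_P(p)+1$ (the extra class being that of $F_P(0)=1$). From below: for each $v$ the polynomial $P(X)-v$ has at most $\deg P$ roots in $\mathbb{F}_p$, so every fibre of $x\mapsto P(x)$ on $\mathbb{F}_p$ has size at most $\deg P$, whence $|\mathcal{V}_P|\ge p/\deg P$. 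Putting these together yields $G_P(p)^{2}\ge p/\deg P$ up to the harmless single-class discrepancy just noted --- which is removed, for instance, by running the same argument with $\mathcal{J}=\{F_P(n):1\le n\le p\}$ in place of $\mathcal{I}$ and observing that already $\{P(n)\bmod p:\ 2\le n\le p\}=P(\mathbb{F}_p\setminus\{1\})\subseteq\mathcal{J}\cdot\mathcal{J}^{-1}$ has at least $(p-1)/\deg P$ elements. (If $p$ divides the leading coefficient of $P$ one reads $\deg P$ throughout as the degree of $P$ modulo $p$, which can only help; the fully degenerate case in which $P$ reduces to a nonzero constant modulo $p$ is tacitly excluded.)

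I do not anticipate a genuine obstacle. The whole content is the observation that consecutive ratios of $F_P$ recover \emph{all} the values of $P$ modulo $p$; after that the argument is pure counting, the only points needing attention being invertibility --- precisely what the hypothesis supplies --- and the cosmetic handling of the empty-product class. As the footnote to the statement signals, the real difficulty lies in \emph{improving} this square-root bound, not in establishing it.
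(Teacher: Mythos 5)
Your argument is correct and is essentially the paper's own proof: both rest on the identity $P(n)\equiv F_P(n)/F_P(n-1)\pmod p$, the observation that the value set of $P$ on $\mathbb{F}_p$ has at least $p/\deg P$ elements, and the counting bound that the ratios $uv^{-1}$ with $u,v$ in the image of $F_P$ number at most $G_P(p)^2$. Your extra care with the empty-product class $F_P(0)=1$ is a detail the paper glosses over (deferring to the factorial case in the cited reference), but it does not change the substance.
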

\begin{proof}
Since every polynomial $P$ has at most $\deg P$ roots, for $p\ge \deg P$ we have that
$$\big\lvert P(n)\,(\bmod\,p)\big\rvert\ge \frac{p}{\deg P}\cdot$$
Now take different values $P(n_1), P(n_2),\dots ,P(n_k)$ such that $P(n_i)\ne P(n_j)\,(\bmod\,p)$ for $i\ne j$. Consider the pairs of the form $(n_i-1,n_i).$ Clearly, 
\[\frac{F_{P}(n_i)}{F_{P}(n_i-1)}=P(n_i)\]
and the result follows in exactly the same way as in the case of factorials \cite{KLM1} which corresponds to $P(n)=n$.
\end{proof}
 
Our goal in this section is to show that $G_P(p)$ is not too large on average for ``good" primes. We restrict ourselves to some specific families of polynomials. In the first part, we assume that the splitting field of $P$ is an imaginary quadratic extension of $\mathbb{Q}$. In the second part, we extend our results to some particular polynomials of any degree.

 \subsection{Algebraic properties of polynomial products}

 We fix a few standard notations from algebraic number theory. Let $L/\mathbb{Q}$ be a Galois extension of number fields and denote by $n_L$ its degree. If $G$ is the Galois group of this extension, we denote by $C$ a subset of $G$ stable by conjugation. For all $x>1$, the function $\pi_C(x)$ will count the number of prime numbers $p \leq x$, non ramified in $L$, and such that $\left[\frac{L/\mathbb{Q}}{p}\right] \in C $, where $\left[\frac{L/\mathbb{Q}}{p}\right]$ is the unique Galois automorphism (up to conjugacy) such that its reduction modulo $p$ coincides with the Frobenius automorphism. In case $C=G$ we have $$\pi_C(x) = \pi(x) \sim \frac{x}{\log x}$$ when $x\rightarrow +\infty$. 
  
  For any ideal $\mathfrak{I}$ of the ring of integers $\mathcal{O}_L$, we denote the norm of an ideal by $N_{L/\mathbb{Q}}(\mathfrak{I})$ and write $N(\mathfrak{I})$. We also denote by $f\left(\mathfrak{p}/p\right)$ the inertial degree $\vert\left[\mathcal{O}_L/\mathfrak{p}:\mathbb{F}_{p}\right]\vert$ of the ideal $\mathfrak{p}$ above the rational prime $p$. The function $\pi_L(x)$ will count the number of prime ideals $\mathfrak{p}$ of $L$ such that $N(\mathfrak{p}) \leq x$. Finally, denote  by $d_L$ the absolute discriminant of the extension $L$. \\
  
  In the rest of the paper, we consider the family of polynomials defined by $f_n(x)=P(x)P(x+1)\dots P(x+n-1)-1, n\geq 1$ and $P(x)\in\mathbb{Z}[x].$

\subsubsection{Imaginary quadratic case}

 In this section, take $P\in \mathbb{Z}[x]$ such that the splitting field of the polynomial $P$ (which we denote by $Spl(P)$) is an imaginary quadratic extension of $\mathbb{Q}$. Denote by $\alpha_1$ and $\alpha_2$ the complex roots of $P$. Hence $Spl(P)=\mathbb{Q}(\alpha_1)$ and denote by $\mathcal{O}_{\mathbb{Q}(\alpha_1)}$ the ring of integers of $\mathbb{Q}(\alpha_1)$.

\begin{prop}\label{unitsimag}
There exists an effective constant $C_{P}$ depending only on $P$ such that $f_n(x)$ is irreducible over $\mathcal{O}_{\mathbb{Q}(\alpha_1)}[x]$ for $n>C_{P}$. 
\end{prop}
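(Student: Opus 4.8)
The plan is to study the factorization of $f_n(x) = P(x)P(x+1)\cdots P(x+n-1) - 1$ over the ring of integers $\mathcal{O}_K$ where $K = \mathbb{Q}(\alpha_1)$ is the imaginary quadratic splitting field, and show it is irreducible for all large $n$. First I would observe that over $\mathcal{O}_K$ the polynomial $P$ splits completely: $P(x) = c(x-\alpha_1)(x-\alpha_2)$ with $c \in \mathbb{Z}$ the leading coefficient, so $P(x+j) = c(x+j-\alpha_1)(x+j-\alpha_2)$ and hence
\[
f_n(x) = c^n \prod_{j=0}^{n-1}(x+j-\alpha_1)(x+j-\alpha_2) - 1 \in \mathcal{O}_K[x],
\]
a polynomial of degree $2n$. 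The strategy I would pursue is the classical one for irreducibility of shifted products (in the spirit of the arguments used for $n!\pm 1$ type polynomials): suppose $f_n = g\cdot h$ is a nontrivial factorization in $\mathcal{O}_K[x]$ with $\deg g, \deg h \geq 1$. Evaluating at the $2n$ integer points $x = -j + \alpha_i$ (equivalently, looking at the values of $f_n$ on a suitable arithmetic-progression-like set, but here cleanly at the roots of the product part) one gets $f_n(\alpha_i - j) = -1$ for $j = 0,\dots,n-1$ and $i=1,2$, so $g(\alpha_i-j)h(\alpha_i-j) = -1$. Since $g,h$ have coefficients in $\mathcal{O}_K$ and the arguments $\alpha_i - j$ lie in $\mathcal{O}_K$, the values $g(\alpha_i-j)$ and $h(\alpha_i-j)$ are algebraic integers in $K$ whose product is $-1$, hence each is a unit in $\mathcal{O}_K$.

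The crucial point — and where the hypothesis that $K$ is \emph{imaginary} quadratic enters decisively — is that $\mathcal{O}_K^\times$ is finite (indeed of order $2$, $4$, or $6$). So $g(\alpha_i - j)$ takes one of only boundedly many values, say among a set $U$ with $|U| \le 6$, as $j$ ranges over $0,\dots,n-1$ and $i$ over $1,2$. Thus $g$ takes values in the finite set $U$ at $2n$ distinct points of $\mathcal{O}_K$ (the points $\alpha_i - j$ are distinct for $n$ large since $\alpha_1 \neq \alpha_2$ and they are non-real). If $2n > |U|(\deg g)$, then some value $u \in U$ is attained by $g$ at more than $\deg g$ points, forcing $g - u \equiv 0$, i.e. $g$ is the constant $u$, contradicting $\deg g \geq 1$. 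A symmetric argument handles $h$. Comparing with $\deg g + \deg h = 2n$ and the trivial bound $\max(\deg g, \deg h) \ge n$, one sees that for $n$ larger than an explicit constant $C_P$ depending only on $|U|$ (hence only on $K$, hence only on $P$), no such factorization exists.

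I would need to be slightly careful about two bookkeeping issues. First, one should rule out the factor of $c^n$ causing trouble: $f_n$ is monic up to the unit/integer $c^n$, and one can either work with the primitive part or simply note that Gauss's lemma over the Dedekind domain $\mathcal{O}_K$ lets us assume $g, h$ have coefficients in $\mathcal{O}_K$ and that irreducibility over $\mathcal{O}_K[x]$ is equivalent to irreducibility over $K[x]$ together with primitivity, which is automatic here. Second, the points $\alpha_i - j$: these are $2n$ elements of $\mathcal{O}_K$, and since $\alpha_1 - \alpha_2 \notin \mathbb{Z}$ (as $\alpha_1$ is non-real) all $2n$ are genuinely distinct, so the counting argument is valid without any further hypothesis once $n$ is large. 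The main obstacle I anticipate is not any deep difficulty but making the constant $C_P$ genuinely effective and handling the edge cases where $\deg g$ or $\deg h$ is small (e.g. $\deg g = 1$) — these are exactly where the "more than $\deg g$ points" pigeonhole is tightest, so one wants $2n > 6 \cdot \max(\deg g)$, and since $\deg g$ could be as large as $2n - 1$ this naive inequality fails; the fix is to apply the pigeonhole to the \emph{smaller} of the two factors, whose degree is at most $n$, giving the clean requirement $2n > 6n$... which also fails, so in fact the right move is to apply it to \emph{both} factors simultaneously and use that $g$ and $h$ are each constant would force $f_n$ constant — more precisely, take the factor of degree $\le n$, say $g$, and note it hits some unit $u$ at $\ge 2n/6 > \deg g$ points once $2n/6 > n$ is replaced by the correct count $\lceil 2n/6 \rceil > \deg g$; since we may also assume $\deg g \le n - 1 < 2n/6$ fails again — so the genuinely correct argument uses that the $2n$ evaluation points split so that $g$ hits repeated values, and one instead bounds $\deg g \cdot \deg h \ge$ (number of forced unit-coincidences), which is the subtle step I would expect to spend the most care on, ultimately recovering a bound of the shape $n > C_P$ with $C_P$ depending only on the order of $\mathcal{O}_K^\times$.
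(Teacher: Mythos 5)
Your setup coincides with the paper's: evaluate at the $2n$ distinct points $\alpha_i-j$, use $f_n(\alpha_i-j)=-1$ to force both factors to take unit values there, and exploit the finiteness of $\mathcal{O}_{\mathbb{Q}(\alpha_1)}^{\times}$. But, as you yourself discover in the final paragraph, pure pigeonhole cannot close the argument: the most popular unit value $u$ is only guaranteed to be attained at about $2n/6=n/3$ of the points, which never exceeds the degree of either factor, so you cannot conclude that $g-u$ vanishes identically. Your proposed repair (``bound $\deg g\cdot\deg h$ by the number of forced unit-coincidences'') is not an argument, and the proof has a genuine gap at exactly the step you flag as subtle.

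The missing idea is a divisor-counting argument rather than a degree-counting one. Take the factor $g$ with $\deg g\ge n$ (so $\deg h\le n$), let $u$ be a unit attained by $g$ at $\ge n/3$ of the evaluation points $\beta_1,\dots,\beta_{n/3}$, and write $g(x)=(x-\beta_1)\cdots(x-\beta_{n/3})q(x)+u$ with $q\in\mathcal{O}_{\mathbb{Q}(\alpha_1)}[x]$. If $g$ took a \emph{different} unit value $u'$ at another evaluation point $\beta$, then
\begin{equation*}
(\beta-\beta_1)\cdots(\beta-\beta_{n/3})\,q(\beta)=u'-u\neq 0,
\end{equation*}
so the $n/3$ pairwise distinct algebraic integers $\beta-\beta_j$ would all divide the fixed nonzero element $u'-u$. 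Since the unit group is finite, there are only finitely many possible differences $u'-u$, and each nonzero one has at most $M$ divisors in $\mathcal{O}_{\mathbb{Q}(\alpha_1)}$ for some constant $M$ depending only on $P$; this is impossible once $n>3M$. Hence $g$ takes the \emph{single} value $u$ at all $2n$ points, so $h$ takes the single value $-1/u$ at all $2n$ points, and since $\deg h\le n<2n$ this forces $h$ to be the constant unit $-1/u$, i.e.\ the factorization is trivial. This divisor step is what your proposal lacks; the remaining points you raise (distinctness of the $2n$ points, the leading coefficient and Gauss's lemma, effectivity of $C_P$) are handled correctly and agree with the paper.
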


\begin{proof}
Observe that $$f_n(\alpha)=-1, \hspace{1cm} \alpha=\alpha_i - j, \hspace{3mm}\,\, i=1,2,\hspace{3mm} 0\leq j\leq n-1.$$

Now, suppose that $f_n(x)=g(x)h(x)$ in $\mathcal{O}_{\mathbb{Q}(\alpha_1)}[x]$ with $\deg g\geq n$ and $\deg h\geq 1$. First, note that the above $2n$ values of $\alpha$ are distinct. Indeed, otherwise, we would have $\alpha_1-j_1=\alpha_2-j_2$ with $j_1\neq j_2$. It would imply $\alpha_1=\alpha_2+k$ with $k$ a nonzero integer which forces $\alpha_1$ to be rational by Vieta's formulas. Hence $g$ has to take units values for $2n$ distinct algebraic integers of $\mathcal{O}_{\mathbb{Q}(\alpha_1)}$. The hypothesis implies that there are at most six units in $\mathcal{O}_{\mathbb{Q}(\alpha_1)}$. Thus, by the pigeonhole principle, we derive that $g(\beta_i)=u$ for at least $n/3$ distinct algebraic integers $\beta_i$\footnote{These are a subset of the previous $\alpha_i-j$.}. Consequently,

\[ g(x)=(x-\beta_1)(x-\beta_2)\cdots(x-\beta_{\frac{n}{3}}) q(x) + u.\]
Suppose that $g$ takes another unit value $u'$ at an algebraic integer point $\beta$. Then

\begin{equation}\label{divisors} (\beta-\beta_1)(\beta-\beta_2)\cdots(\beta-\beta_{\frac{n}{3}})q(\beta)=u'-u. \end{equation}

Define $M=\max\left\{d(y), y=v_1-v_2, v_1,v_2 \text{ units}\right\}$ where $d(y)$ denotes the divisor function in $\mathcal{O}_{\mathbb{Q}(\alpha_1)}.$ If $n>3M$, then immediately get a contradiction.

In fact, in view of (\ref{divisors}), the values $\beta-\beta_j, 1\leq j \leq \frac{n}{3}$ have to be divisors of $u'-u$ and we have at most $M$ of those. We deduce that $g(\alpha)=u$ for all $\alpha_i-j,\,i=1,2,\hspace{3mm} 0\leq j\leq n-1$ and so $h(\alpha)=-1/u$ at these $2n$ values. This is indeed impossible since $\deg h\leq n$.

\end{proof}
We note that the previous argument is standard while working over $\mathbb{Q},$ see for example,~\cite[Theorem $8$]{Dorwart} or \cite[Theorem $5.1$]{PolyaSchur}. We deduce
\begin{corollary}\label{corimag} Suppose that $Spl(P)$ is an imaginary quadratic extension of $\mathbb{Q}$. Let$n>C_{P}$ be as in Proposition \ref{unitsimag} and set $\beta$ to be a root of $f_n$. Then

$$\mathbb{Q}(\beta) \cap Spl(P) = \mathbb{Q}.$$

\end{corollary}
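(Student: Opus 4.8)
The plan is to deduce Corollary~\ref{corimag} directly from Proposition~\ref{unitsimag}. Since $Spl(P)$ is an imaginary quadratic extension of $\mathbb{Q}$, it has no proper intermediate subfield: any field $K$ with $\mathbb{Q}\subseteq K \subseteq Spl(P)$ satisfies $[Spl(P):\mathbb{Q}]=2$, so $K=\mathbb{Q}$ or $K=Spl(P)$ by the multiplicativity of degrees. Therefore $\mathbb{Q}(\beta)\cap Spl(P)$ is either $\mathbb{Q}$ or all of $Spl(P)$, and it suffices to rule out the second possibility, i.e.\ to show $Spl(P)\not\subseteq\mathbb{Q}(\beta)$.

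First I would suppose for contradiction that $Spl(P)\subseteq\mathbb{Q}(\beta)$, equivalently $\alpha_1\in\mathbb{Q}(\beta)$ (recall $Spl(P)=\mathbb{Q}(\alpha_1)$). Then $\mathbb{Q}(\beta)$ contains $\mathcal{O}_{\mathbb{Q}(\alpha_1)}$ and hence the minimal polynomial of $\beta$ over $\mathbb{Q}$ would be divisible in $\mathbb{Q}(\alpha_1)[x]$ — in fact in $\mathcal{O}_{\mathbb{Q}(\alpha_1)}[x]$ after clearing denominators and using Gauss's lemma over the PID/Dedekind setting — by a nontrivial factor. More precisely: $\beta$ is a root of $f_n$, and by Proposition~\ref{unitsimag}, $f_n$ is irreducible over $\mathcal{O}_{\mathbb{Q}(\alpha_1)}[x]$ (hence over $\mathbb{Q}(\alpha_1)[x]$), so $[\mathbb{Q}(\alpha_1)(\beta):\mathbb{Q}(\alpha_1)] = \deg f_n$. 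On the other hand, if $\alpha_1\in\mathbb{Q}(\beta)$ then $\mathbb{Q}(\alpha_1)(\beta)=\mathbb{Q}(\beta)$, so $[\mathbb{Q}(\beta):\mathbb{Q}(\alpha_1)] = \deg f_n$, which forces $[\mathbb{Q}(\beta):\mathbb{Q}] = 2\deg f_n$. But $\beta$ is a root of $f_n\in\mathbb{Z}[x]$ of degree $\deg f_n$, so $[\mathbb{Q}(\beta):\mathbb{Q}]\le \deg f_n$, a contradiction.

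The only mild subtlety — and the step I would be most careful about — is the passage from "irreducible over $\mathcal{O}_{\mathbb{Q}(\alpha_1)}[x]$'' to "irreducible over $\mathbb{Q}(\alpha_1)[x]$,'' and the degree bookkeeping in a relative extension. The first is Gauss's lemma: $\mathcal{O}_{\mathbb{Q}(\alpha_1)}$ is a Dedekind domain, $f_n$ is monic (hence primitive in the content sense, being monic), and a monic polynomial that is irreducible over the ring of integers is irreducible over the fraction field — or one can simply note $f_n$ monic with coefficients in $\mathbb{Z}\subseteq\mathcal{O}_{\mathbb{Q}(\alpha_1)}$ and invoke that any factorization over $\mathbb{Q}(\alpha_1)$ of a monic integer-coefficient polynomial can be taken with coefficients in $\mathcal{O}_{\mathbb{Q}(\alpha_1)}$. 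The rest is the tower law $[\mathbb{Q}(\beta):\mathbb{Q}] = [\mathbb{Q}(\beta):\mathbb{Q}(\alpha_1)]\,[\mathbb{Q}(\alpha_1):\mathbb{Q}]$, which immediately yields the numerical contradiction. Thus $\mathbb{Q}(\beta)\cap Spl(P)=\mathbb{Q}$, completing the proof.
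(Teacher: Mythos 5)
Your argument is correct and is essentially the paper's own proof: both reduce to ruling out $Spl(P)\subseteq\mathbb{Q}(\beta)$ and derive the same degree contradiction from the tower law, using the irreducibility of $f_n$ over $\mathcal{O}_{\mathbb{Q}(\alpha_1)}[x]$ from Proposition~\ref{unitsimag} (the paper presents this as a field diagram with degrees $2n$, $2$, $1$, $2n$). Your extra care about passing from irreducibility over $\mathcal{O}_{\mathbb{Q}(\alpha_1)}[x]$ to irreducibility over $\mathbb{Q}(\alpha_1)[x]$ via Gauss's lemma is a detail the paper leaves implicit, and is a welcome clarification rather than a deviation.
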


\begin{proof}
Let $\mathbb{Q}(\alpha_1)$ denote the splitting field of $P$.
Suppose that $\mathbb{Q}(\beta) \cap \mathbb{Q}(\alpha_1) \neq \mathbb{Q},$ in which case $\mathbb{Q}(\alpha_1)\subset \mathbb{Q}(\beta).$ By Proposition \ref{unitsimag} $f_n$ is irreducible over $\mathcal{O}_{\mathbb{Q}(\alpha_1)}[x].$ Therefore, $\beta$ is of degree $2n$ over $\mathbb{Q}(\alpha_1)$ and also over $\mathbb{Q}$. 
The following diagram immediately yields a contradiction.

\center{\begin{tikzpicture}[node distance = 2cm, auto]
      \node (Q) {$\mathbb{Q}$};
      \node (E) [above of=Q, left of=Q] {$\mathbb{Q}(\beta)$};
      \node (F) [above of=Q, right of=Q] {$\mathbb{Q}(\alpha_1)$};
      \node (K) [above of=Q, node distance = 4cm] {$\mathbb{Q}(\beta, \alpha_1)$};
      \draw[-] (Q) to node {$2n$} (E);
      \draw[-] (Q) to node [swap] {$2$} (F);
      \draw[-] (E) to node {$1$} (K);
      \draw[-] (F) to node [swap] {$2n$} (K);
      \end{tikzpicture}}

\end{proof}

The argument above heavily relies on the finiteness of the units of the splitting field of $P$, thus it can only work when the roots of $P$ lie in an imaginary quadratic extension. However, we can prove weaker (but sufficient for our applications) algebraic results concerning some  subfamily of polynomials $f_n(x)$ for some other class of polynomials as we will show in the next section.

%\subsubsection{Product of linear polynomials and shifted factorials}

\subsubsection{Radical extension case}\label{Kummer}
In this section, we consider the case of binomials $P$ of higher degrees which give rise to simple radical extensions. The first two lemmas are well-known.

\begin{lemma}[Theorem $9.1$,\cite{Lang}]\label{Irreducible} 
Let $a$ an element of a field $K$, and $n \in \mathbb{N}.$ Then the polynomial $x^d-a$ 
 is irreducible over $K$ if and only if $a\notin K^p$ for all primes $p\mid d$ and $a\notin -4K^4$ when $4\mid d$.
\end{lemma}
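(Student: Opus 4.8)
The final statement is Lemma \ref{Irreducible}, which is just quoted from Lang's \emph{Algebra} (Theorem 9.1), so there is really nothing to prove from scratch — the task is to indicate how one establishes this classical criterion for irreducibility of $x^d-a$.

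\medskip

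The plan is to reduce the general case $d$ to the prime-power case and then to the prime case, following the standard argument (as in Lang). First I would record the elementary fact that for a field $K$ and $a \in K^\times$, if $x^n - a$ and $x^m - a$ behave well, one can build up $x^{nm}-a$; more precisely, the key reduction is: \emph{$x^d - a$ is irreducible over $K$ if and only if (i) for every prime $p \mid d$ we have $a \notin K^p$, and (ii) if $4 \mid d$ then $a \notin -4K^4$.} The ``only if'' direction is immediate: if $a = b^p$ then $x^{d/p}-b$ is a proper factor, and if $a = -4c^4$ then $x^4+4c^4 = (x^2-2cx+2c^2)(x^2+2cx+2c^2)$ gives a factorization of $x^4-a$ after substituting, hence of $x^d-a$. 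So the substance is the ``if'' direction.

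\medskip

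For the ``if'' direction I would argue as follows. Let $\alpha$ be a root of $x^d-a$ in an algebraic closure, so all roots are $\zeta^j\alpha$ for $\zeta$ a primitive $d$-th root of unity. Suppose $g(x) \in K[x]$ is the minimal polynomial of $\alpha$, of degree $e \mid d$. The constant term of $g$ (up to sign) is a product of $e$ of the roots, hence of the form $\zeta^k \alpha^e$ for some root of unity $\zeta^k$; thus $\alpha^e = c$ for some $c \in K$ after absorbing the root of unity — more carefully, $(\text{const term of }g)^{?}$ lies in $K$ and equals a root of unity times $\alpha^e$, and taking an appropriate power shows $\alpha^e \in K\cdot\mu_\infty$; the clean statement one proves is that $\alpha^e$ times a root of unity lies in $K$. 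The core number-theoretic lemma, which I would isolate and prove first, is: \emph{if $\zeta\alpha^e \in K$ with $\zeta$ a root of unity and $1 \le e$, and $p$ is a prime, then $a \in K^p$ unless $e$ is divisible by the full $p$-part of $d$}; combined with the parallel statement handling the prime $2$ (where the exceptional $-4K^4$ case enters, because $\zeta_4 = i$ can be a square root times a unit), one concludes $e = d$ and hence $g = x^d - a$ is irreducible. The main obstacle — and the only genuinely delicate point — is this last step at the prime $2$: one has to show that the only way a proper power of $\alpha$ can lie in $K$ (up to a root of unity) when the odd-prime obstruction is absent is precisely when $4 \mid d$ and $a \in -4K^4$, which requires a short but careful case analysis using $i = (1+i)^2/2$ and the identity $x^4 + 4 = (x^2-2x+2)(x^2+2x+2)$.

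\medskip

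In summary: (1) dispose of ``only if'' by exhibiting explicit factorizations; (2) prove the auxiliary lemma that a root of unity times a power $\alpha^e$ lying in $K$ forces $a \in K^p$ outside the $2$-adic exceptional case; (3) apply it prime-by-prime to the degree $e$ of the minimal polynomial of $\alpha$ to force $e = d$; (4) handle $p = 2$ and the $-4K^4$ exception separately. Since this is Theorem 9.1 of \cite{Lang}, in the paper itself one simply cites it and moves on; the above is the shape of the underlying proof.
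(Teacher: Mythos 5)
The paper offers no proof of this lemma at all---it is imported verbatim as Theorem 9.1 of Lang's \emph{Algebra}---and you correctly identify this, so citing it is exactly what is expected here. Your sketch of the underlying argument (explicit factorizations for the ``only if'' direction, and for the ``if'' direction the analysis of the constant term of a putative proper irreducible factor, giving $\zeta\alpha^e\in K$ and forcing $a\in K^p$ or the $-4K^4$ exception via the Sophie Germain identity) is the standard proof and is accurate in outline.
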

An easy classical computation gives the discriminant of such polynomials.
\begin{lemma}\label{calculdisc}
$$disc(x^d + ax + b)=(-1)^{\frac{d(d-1)}{2}}((1-d)^{d-1}a^d + d^db^{d-1}).$$ In particular, 
$$\vert disc(x^d-a)\vert=d^na^{d-1}.$$
\end{lemma}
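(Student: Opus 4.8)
The plan is to use the standard relation between the discriminant and the resultant of a polynomial with its derivative. For a monic polynomial $f$ of degree $d$ one has $disc(f)=(-1)^{d(d-1)/2}\,\mathrm{Res}(f,f')$, and evaluating the resultant on the roots of the second argument gives
$$\mathrm{Res}(f,f')=(-1)^{d(d-1)}(\mathrm{lc}\,f')^{\,d}\prod_{f'(\beta)=0}f(\beta),$$
the product being taken over the roots of $f'$ counted with multiplicity. Here $f(x)=x^d+ax+b$, so $f'(x)=dx^{d-1}+a$, $\mathrm{lc}\,f'=d$, and every root $\beta$ of $f'$ satisfies $\beta^{d-1}=-a/d$.

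First I would treat $a$ and $b$ as indeterminates and assume $a\neq 0$; the general case then follows at once, since the asserted identity is a polynomial identity in $\mathbb{Z}[a,b]$ and hence holds on the Zariski-dense set $a\neq 0$. Using $\beta^{d-1}=-a/d$ I reduce $f(\beta)=\beta\cdot\beta^{d-1}+a\beta+b=\tfrac{a(d-1)}{d}\beta+b$ for each root $\beta$ of $f'$. Writing $c=\tfrac{a(d-1)}{d}$ and using $\prod_{f'(\beta)=0}(x-\beta)=x^{d-1}+a/d$, I evaluate
$$\prod_{f'(\beta)=0}(c\beta+b)=c^{\,d-1}\prod_{f'(\beta)=0}\Bigl(\beta+\tfrac{b}{c}\Bigr)=(-1)^{d-1}c^{\,d-1}\Bigl(\bigl(-\tfrac{b}{c}\bigr)^{d-1}+\tfrac{a}{d}\Bigr)=b^{\,d-1}+(-1)^{d-1}\tfrac{a}{d}\,c^{\,d-1}.$$
Substituting $c^{\,d-1}=a^{d-1}(d-1)^{d-1}/d^{\,d-1}$ and collecting the powers of $d$ yields $\mathrm{Res}(f,f')=(-1)^{d(d-1)}\bigl(d^{\,d}b^{\,d-1}+(-1)^{d-1}(d-1)^{d-1}a^{d}\bigr)$. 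Finally I rewrite $(-1)^{d-1}(d-1)^{d-1}=(1-d)^{d-1}$, and since $d(d-1)$ is even one has $(-1)^{d(d-1)/2+d(d-1)}=(-1)^{d(d-1)/2}$; this gives precisely $disc(x^d+ax+b)=(-1)^{d(d-1)/2}\bigl((1-d)^{d-1}a^{d}+d^{\,d}b^{\,d-1}\bigr)$.

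For the displayed special case I specialize the identity by setting the coefficient of $x$ to $0$ and the constant term to $-a$, obtaining $disc(x^d-a)=(-1)^{d(d-1)/2}d^{\,d}(-a)^{d-1}=(-1)^{d(d-1)/2}(-1)^{d-1}d^{\,d}a^{d-1}$, whence $\lvert disc(x^d-a)\rvert=d^{\,d}\lvert a\rvert^{d-1}$. The computation is entirely routine; the only point requiring care is the sign bookkeeping — in particular checking $(-1)^{d-1}(d-1)^{d-1}=(1-d)^{d-1}$ and that the two factors of $\pm1$ coming from $disc$ and from $\mathrm{Res}$ combine correctly for both parities of $d$ — while the degenerate value $a=0$, where $f'$ has a repeated root at the origin, causes no trouble because the final statement is an identity of polynomials.
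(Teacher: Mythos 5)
Your proof is correct, and since the paper offers no argument here (it simply cites the formula as ``an easy classical computation''), your resultant calculation --- $disc(f)=(-1)^{d(d-1)/2}\mathrm{Res}(f,f')$, evaluating $\mathrm{Res}(f,f')$ on the roots of $f'$ via $\beta^{d-1}=-a/d$, and extending from $a\neq 0$ by polynomial identity --- is exactly the standard derivation the authors have in mind. One small point worth recording: your computation shows that the exponent in the paper's second display is a typo, the correct statement being $\vert disc(x^d-a)\vert=d^{d}\vert a\vert^{d-1}$ rather than $d^{n}a^{d-1}$.
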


%\begin{lemma}\label{Galoiscubics}
%For any integer $k$, set $a = k^2 + k + 7$. The polynomial $x^3 - ax + a$ is
%irreducible over $\mathbb{Q}$ and has Galois group $A_3$.
%\end{lemma}

Assume $P(x)=x^d-a$ with $a\neq \pm 1$ a squarefree integer. Then for any integer $k\geq 1$, we have 
\begin{align*} f_{kq}(x) \bmod q = P(x)P(x+1)\dots P(x+kq-1)-1 \bmod q \\ =
 (x^d(x+1)^d\dots (x+q-1)^d)^k - 1 \bmod q \end{align*} for every prime divisor $q$ of $a$. Under these conditions, we deduce

\begin{lemma}\label{discshifts} Suppose that $q\vert a$ and $(dk,q)=1.$ Then
$$disc(f_{kq}(x)) \bmod q = disc(f_{kq}(x)\bmod q) \neq 0.$$
\end{lemma}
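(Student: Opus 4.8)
The statement to prove is Lemma~\ref{discshifts}: if $q \mid a$ and $(dk,q)=1$, then $\mathrm{disc}(f_{kq}(x)) \bmod q = \mathrm{disc}(f_{kq}(x) \bmod q) \neq 0$. The plan is to analyze the reduction $g(x) := f_{kq}(x) \bmod q \in \mathbb{F}_q[x]$, show that it is separable over $\mathbb{F}_q$, and then invoke the general compatibility of the discriminant with reduction for polynomials whose leading coefficient does not reduce to zero.

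First I would record, as already computed in the displayed lines preceding the lemma, that since $q \mid a$ we have $P(x) \equiv x^d \pmod q$, hence
\[
g(x) = f_{kq}(x) \bmod q = \bigl(x^d (x+1)^d \cdots (x+q-1)^d\bigr)^k - 1 = \Bigl(\textstyle\prod_{j=0}^{q-1}(x+j)\Bigr)^{dk} - 1 \quad \text{in } \mathbb{F}_q[x].
\]
Now the key observation is that $\prod_{j=0}^{q-1}(x+j) = x^q - x$ in $\mathbb{F}_q[x]$, by Fermat's little theorem (the polynomial $x^q - x$ has exactly the elements of $\mathbb{F}_q$ as roots, each simple, and is monic of degree $q$). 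Therefore
\[
g(x) = (x^q - x)^{dk} - 1 \quad \text{in } \mathbb{F}_q[x],
\]
a polynomial of degree $qdk$ with leading coefficient $1$. Since $f_{kq}$ is monic of degree $qdk$ over $\mathbb{Z}$ and its leading coefficient $1$ does not vanish mod $q$, the reduction does not drop degree, and the standard fact that the discriminant of a polynomial is a polynomial in its coefficients with the leading coefficient inverted in a controlled way gives $\mathrm{disc}(f_{kq}(x)) \bmod q = \mathrm{disc}(g(x))$; this is the routine half.

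The substantive part is showing $\mathrm{disc}(g) \neq 0$ in $\mathbb{F}_q$, i.e.\ that $g(x) = (x^q-x)^{dk} - 1$ is separable. For this I would compute $\gcd(g, g')$. We have $g'(x) = dk\,(x^q-x)^{dk-1}\cdot(x^q-x)'$, and in $\mathbb{F}_q[x]$ one has $(x^q-x)' = qx^{q-1} - 1 = -1$. Since $(dk,q)=1$, the scalar $dk$ is a unit in $\mathbb{F}_q$, so $g'(x) = -dk\,(x^q-x)^{dk-1}$, which is (up to a unit) a power of $x^q - x$. Any common root of $g$ and $g'$ in $\overline{\mathbb{F}_q}$ would then be a root of $x^q - x$, forcing $(x^q-x)^{dk} = 0$ at that point, hence $g = 0 - 1 = -1 \neq 0$ there — a contradiction. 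Thus $\gcd(g,g') = 1$, $g$ is separable, and $\mathrm{disc}(g) \neq 0$.

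The main obstacle — really the only place requiring care — is the bookkeeping that lets one pass from ``$g$ is separable'' to ``$\mathrm{disc}(f_{kq}) \bmod q \neq 0$'' cleanly: one must be sure the reduction preserves the degree (which it does, leading coefficient $1$) so that no spurious factor of a vanishing leading coefficient enters the discriminant formula, and one should note that $\mathrm{disc}$ is insensitive to whether we view it over $\mathbb{Z}$ then reduce, or reduce then compute, precisely because it is a universal polynomial identity in the coefficients valid over any commutative ring once the degree is fixed. Everything else is the elementary identity $\prod_{j=0}^{q-1}(x+j) = x^q - x$ over $\mathbb{F}_q$ and the derivative computation, both of which use the hypotheses $q \mid a$ and $(dk,q)=1$ in exactly the way indicated.
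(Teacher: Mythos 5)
Your proof is correct and follows essentially the same route as the paper: reduce $f_{kq}$ modulo $q$, show $f_{kq}$ and $f'_{kq}$ are coprime in $\mathbb{F}_q[x]$, and conclude via the resultant/discriminant that $q\nmid \mathrm{disc}(f_{kq})$. In fact you supply the explicit computation the paper leaves as ``easily shown'' --- the identity $\prod_{j=0}^{q-1}(x+j)=x^q-x$, the derivative $-dk\,(x^q-x)^{dk-1}$, and the observation that a common root would force $g=-1$ --- together with the (correct) remark that monicity of $f_{kq}$ makes the discriminant commute with reduction.
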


\begin{proof} Using the previous reduction, we can easily show that the polynomials $f_{kq}(x)$ and $f'_{kq}(x)$ are coprime in $\mathbb{Z}/ q\mathbb{Z}[x]$. Thus, denoting by $Res(f,g)$ the resultant of two polynomials, we have $$Res(f_{kq}(x) \bmod p, f'_{kq}(x)\bmod p) \neq 0.$$ This implies that $disc(f_{kq}(x) \bmod q) \neq 0$ or equivalently $q\nmid disc(f_{kq}(x))$.

 \end{proof}
%From Lemma \ref{calculdisc} and \ref{discshifts}, we deduce
%\begin{corollary}\label{indfields}
%For $P(x)=x^n-a$, if we assume $a\neq \pm 2$, we have for every $k\geq 1$,
%$$K_{ka} \cap Spl(P) = \mathbb{Q}$$ where $K_{ka}=\mathbb{Q}(\beta_k)$ where $\beta_k$ is a root of $f_{ka}$.
%\end{corollary}
%
%\begin{proof} Divisibility of discriminant of fields, the intersection can only be Spl(P), works also for the family of trinomials above, the splitting field being of degree $3$.\end{proof}
%
Combining all of the above, we arrive at
\begin{prop}\label{indfieldsbinom}
Assume $P(x)=x^d-a$ with $(d,a)=1$ and $a\neq \pm 1$ squarefree. If $k\geq 1$ is such that $(k,a)=1$ and $q$ is a prime divisor of $a$ then $P(x)$ is irreducible over $K_{kq}:=\mathbb{Q}(\beta_{kq})$ where $\beta_{kq}$ is a root of $f_{kq}$.
\end{prop}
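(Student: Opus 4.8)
The plan is to apply the irreducibility criterion of Lemma~\ref{Irreducible}: it suffices to show that, for the fixed prime $q\mid a$, the element $a$ is not a $p$-th power in $K_{kq}$ for any prime $p\mid d$, and --- when $4\mid d$ --- that $a\notin -4K_{kq}^4$. (We may assume $d\geq 2$, the case $d=1$ being trivial.) Both conditions will follow from a single fact, namely that $q$ is \emph{unramified} in $K_{kq}$; granting this, a short valuation computation at a prime of $\mathcal{O}_{K_{kq}}$ above $q$ closes the argument, using only that $a$ is squarefree.

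\emph{Step 1 (unramifiedness of $q$).} Since $P$ is monic, $f_{kq}(x)=\prod_{n=0}^{kq-1}P(x+n)-1$ is monic with integer coefficients, so $\beta_{kq}$ is an algebraic integer; let $g\in\mathbb{Z}[x]$ be its monic minimal polynomial. Writing $f_{kq}=gh$ in $\mathbb{Z}[x]$ by Gauss's lemma and using $disc(gh)=disc(g)\,disc(h)\,Res(g,h)^2$, one gets $disc(g)\mid disc(f_{kq})$ in $\mathbb{Z}$. From $(d,a)=(k,a)=1$ and $q\mid a$ we have $(dk,q)=1$, so Lemma~\ref{discshifts} gives $q\nmid disc(f_{kq})$, whence $q\nmid disc(g)$. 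Since $disc(g)=[\mathcal{O}_{K_{kq}}:\mathbb{Z}[\beta_{kq}]]^2\,d_{K_{kq}}$, this yields $q\nmid d_{K_{kq}}$, i.e. $q$ is unramified in $K_{kq}=\mathbb{Q}(\beta_{kq})$.

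\emph{Step 2 (the local computation).} Fix a prime ideal $\mathfrak{q}$ of $\mathcal{O}_{K_{kq}}$ above $q$ and let $v_{\mathfrak{q}}$ be the associated normalized valuation. Since $q$ is unramified, $v_{\mathfrak{q}}(x)=v_q(x)$ for all $x\in\mathbb{Q}^\times$, so $v_{\mathfrak{q}}(a)=v_q(a)=1$ because $a$ is squarefree and divisible by $q$. If $a=b^p$ with $b\in K_{kq}$ and $p\mid d$ prime, then $1=v_{\mathfrak{q}}(a)=p\,v_{\mathfrak{q}}(b)$, impossible since $p\geq 2$. If $4\mid d$ and $a=-4b^4$ with $b\in K_{kq}$, then $v_{\mathfrak{q}}(a)=v_{\mathfrak{q}}(-4)+4\,v_{\mathfrak{q}}(b)$ with $v_{\mathfrak{q}}(-4)=v_q(-4)\in\{0,2\}$, so $v_{\mathfrak{q}}(a)$ is even, contradicting $v_{\mathfrak{q}}(a)=1$. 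Hence none of the obstructions in Lemma~\ref{Irreducible} occurs, and $P(x)=x^d-a$ is irreducible over $K_{kq}$.

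\emph{Main obstacle.} The delicate point is Step~1: a priori $f_{kq}$ need not be irreducible over $\mathbb{Q}$, so $disc(f_{kq})$ cannot be identified directly with the field discriminant up to a square factor. Passing to the minimal polynomial of $\beta_{kq}$ --- whose discriminant still divides $disc(f_{kq})$ --- repairs this; the remaining ingredients are the purely formal Lemma~\ref{Irreducible} together with the input $q\nmid disc(f_{kq})$ supplied by Lemma~\ref{discshifts}.
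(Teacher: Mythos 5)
Your proof is correct and follows essentially the same route as the paper: both reduce matters to Lemma~\ref{Irreducible} and use Lemma~\ref{discshifts} to show $q\nmid disc(f_{kq})$, hence that $q$ is unramified in $K_{kq}$, which is incompatible with $a$ being a $p$-th power there. Your valuation computation at a prime above $q$ is a clean variant of the paper's final step (the paper instead argues that $a=\beta^p$ would force $q$ to ramify in the subfield $\mathbb{Q}(\beta)\subseteq K_{kq}$, hence in $K_{kq}$), and it has the added merit of explicitly treating the $a\in -4K_{kq}^4$ obstruction when $4\mid d$ and of carefully passing to the minimal polynomial of $\beta_{kq}$, two points the paper glosses over.
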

\begin{proof} 
We show that the conditions of Lemma \ref{Irreducible} are fulfilled. We only need to verify that $a \notin K_{kq}^{p}$ for $p$ a prime divisor of $d.$ If $a \in K_{kq}^{p}$, then there exists $\beta \in K_{kq}$ such that $a=\beta^p$, in other words the polynomial $x^p-a$ has a root in $K_{kq}$. Thus, $\mathbb{Q}(\beta)$ would be a subextension of $K_{kq}$. By hypothesis, $x^p-a$ has no rational root and thus it is irreducible over $\mathbb{Q}.$ Hence it is the minimal polynomial of $\beta$ and, by Lemma \ref{discshifts}, $q$ divides the absolute discriminant of the field $\mathbb{Q}(\beta)$. Therefore, $q$ should ramify in $K_{kq}$. We know that $disc(K_{kq})$ is a factor of the discriminant of the minimal polynomial of $\beta_{kq}$ which is by definition a divisor of $f_{kq}$. Thus by transitivity, $q$ has to divide $disc(f_{kq})$ which is in contradiction with Lemma \ref{discshifts}.

 \end{proof}

In order to bound discriminants of number fields and locate Siegel zeroes, we need the the following simple result.

\begin{lemma}\label{distance} Take $P(x)=x^d-a \in \mathbb{Z}[x]$ an irreducible polynomial and $d\geq 2.$ If $\beta_i, \,i=1\dots n$ are the complex roots of $f_n,$ then
$$ \vert \beta_i - \beta_j\vert \ll_d n, \hspace{3mm} i \neq j\cdot $$  
  \end{lemma}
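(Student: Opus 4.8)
The statement claims that the pairwise distances between the complex roots of $f_n(x) = P(x)P(x+1)\cdots P(x+n-1) - 1$ are $O_d(n)$ when $P(x) = x^d - a$. The natural strategy is to control the size of the roots themselves: if every root $\beta_i$ satisfies $|\beta_i| \ll_d n$, then by the triangle inequality $|\beta_i - \beta_j| \le |\beta_i| + |\beta_j| \ll_d n$, which is exactly what is wanted. So the plan reduces to bounding $\max_i |\beta_i|$.

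**Main step: bounding the roots via the functional equation.** Suppose $|\beta| = R$ is large (larger than some constant depending only on $d$ and $a$) and $f_n(\beta) = 0$, i.e. $\prod_{k=0}^{n-1} P(\beta + k) = 1$. For each factor, $|P(\beta+k)| = |(\beta+k)^d - a|$. When $R$ is large compared to $n$ and to $|a|^{1/d}$, each $|\beta + k| \ge R - n$, so $|(\beta+k)^d - a| \ge (R-n)^d - |a| \ge \tfrac12 (R-n)^d$ once $R - n$ is large enough in terms of $d, a$. Hence $1 = \prod_{k=0}^{n-1}|P(\beta+k)| \ge \big(\tfrac12 (R-n)^d\big)^n$, which is impossible as soon as $(R-n)^d \ge 2$, i.e. as soon as $R \ge n + 2^{1/d}$. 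Therefore every root satisfies $|\beta_i| < n + C_d$ for an explicit constant $C_d$ (absorbing the $|a|$-dependence, which the paper notes is really a dependence on small prime factors — but a crude bound suffices here). This immediately gives $|\beta_i - \beta_j| \le 2n + 2C_d \ll_d n$.

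**Alternative via Cauchy's bound, and the expected obstacle.** One could also argue purely from the coefficients: $f_n$ is monic of degree $dn$, and Cauchy's root bound says every root has absolute value at most $1 + \max |c_j|$ where the $c_j$ are the (normalized) coefficients. The difficulty with this route is that the coefficients of $\prod_{k=0}^{n-1}(x+k)^d$ can be as large as (roughly) $(n!)^d$ in size, giving only an exponential bound on the roots — far worse than $O(n)$. This is the one genuine subtlety: a naive coefficient estimate is useless, and one must exploit the multiplicative structure of $f_n + 1$ as in the functional-equation argument above, where the smallness of $|f_n(\beta)+1| = 1$ forces all the translated factors $\beta + k$ to be individually small. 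I expect the referee's concern (and the real content) to lie exactly in recognizing that the product structure, not the coefficient size, is what yields the linear-in-$n$ bound; the rest is a routine triangle-inequality estimate. I would therefore present the functional-equation argument as the main line, remark that the implied constant depends only on $d$ (with the $a$-dependence hidden in the threshold, as already flagged in the text), and conclude.
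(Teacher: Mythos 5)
Your proof is correct, and it reaches the bound by a slightly different (more elementary) route than the paper. The paper's own proof is a one-line appeal to Rouch\'e's theorem applied to $f_n$ and $g=f_n+1=\prod_{k=0}^{n-1}P(x+k)$ in a ball of radius $O(n)$: the roots of $g$ are explicitly $-k+\zeta a^{1/d}$, $0\le k\le n-1$, $\zeta^d=1$, so they all lie in such a ball, and Rouch\'e transfers the count of zeros to $f_n$. Your argument replaces the appeal to Rouch\'e by a direct contradiction: a root $\beta$ of $f_n$ with $|\beta|=R\ge n+C_{d,a}$ would force $\prod_k|P(\beta+k)|>1$, contradicting $\prod_k P(\beta+k)=1$. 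These are really two packagings of the same core estimate --- verifying the Rouch\'e hypothesis $|f_n-g|=1<|g|$ on the boundary circle requires exactly the lower bound on $\prod_k|P(z+k)|$ that you prove --- so your version is, if anything, more self-contained, and your remark that a naive Cauchy/coefficient bound only gives an exponential root bound correctly isolates why the multiplicative structure of $f_n+1$ is the actual content here. Two cosmetic points: to get a strict contradiction you should require $(R-n)^d>2$ rather than $\ge 2$, and the dependence of your constant on $a$ is harmless since $P$ is fixed (the paper's $\ll_d$ likewise absorbs it, its ball having radius $3(n+\sqrt{a})$).
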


\begin{proof} A direct application of Rouch\'{e}'s Theorem to the polynomials $f_n$ and $g=f_n+1$ in the ball $B(0,3(n+\sqrt{a})):=\{z \in \mathbb{C}, \vert z \vert < 3(n+\sqrt{a})\}$ gives the result after noticing that the roots of the polynomial $g$ are exactly $-k + \zeta \sqrt[d]{a}, k=0\dots n-1$ with $\zeta$ being a $d$-th root of unity. \end{proof}

\subsection{Proofs of the results of Section \ref{distrib}}

\subsubsection{Proof of Theorem \ref{average}}

Let $N$ be a parameter which will be determined later. For $n\ge 1$ we consider the family of polynomials $$f_n(t)=P(t)P(t+1)\dots P(t+n-1)-1.$$ By Proposition \ref{unitsimag}, $f_n(t)$ is irreducible over $\mathbb{Q}$ for all $n\ge 1.$ Let  $\rho_n(p)$ denote the number of roots of $f_n(t)$ modulo $p.$ Taking $p$ such that $P$ has no root modulo $p$, we observe that $f_n(t_0)\equiv 0\,(\bmod\,p)$ implies $$F_P(t_0+n-1)=F_P(t_0-1)\,(\bmod\,p).$$ Therefore, each distinct root of $f_n(t)$ modulo $p$ increases the number of ``missing" values by $1$, so we get 

\begin{align}\label{missing} \frac{1}{\pi(x)}\sum_{p\leq x \atop P \text{ has no root mod }p} \left(p-G_{P}(p)\right)&\geq \sum_{n=1}^{N}\frac{1}{\pi(x)}\sum_{p\leq x \atop P \text{ has no root mod }p}\rho_{n}(p).  \end{align} Our task now is to produce a lot of roots of $f_n$ for many values of $p.$

As before, denote by $\mathbb{Q}(\alpha_1)$ the splitting field of $P$. For $n$ sufficiently large, we know by Corollary \ref{corimag} that

$$\mathbb{Q}(\beta_n) \cap \mathbb{Q}(\alpha_1) = \mathbb{Q} $$ where $\beta_n$ is a root of $f_n$. In the following, we will denote by $K_n$ the extension $\mathbb{Q}(\beta_n)$.

We need a lot of primes $p$ such that $P$ has no root mod $p$ and $f_n$ has a root mod $p$. We will achieve this by producing a lot of primes $\mathfrak{p}$ above $p$ such that $f\left(\mathfrak{p}/p\right)=1$ and $\mathfrak{p}$ does not split in $K_n(\alpha_1)$.
 
 \begin{center}\begin{tikzpicture}
  \matrix (m) [matrix of math nodes,column sep={4cm,between origins},row sep=2cm] {
    K_n(\alpha_1) & \mathfrak{P} & \mathcal{O}_{K_n(\alpha_1)}/\mathfrak{P} \\
   K_n & \mathfrak{p} & \mathcal{O}_{K_n}/\mathfrak{p} \\
     \mathbb{Q} & p &  \mathbb{F}_{p} \\};
  \draw (m-1-1) -- node[right,align=left] {$2$} (m-2-1);
  \draw (m-1-2) -- node[right,align=left] {non-split prime} (m-2-2);
  \draw (m-1-3) -- node[right,align=left] {extension of\\finite fields} (m-2-3);
  \draw (m-2-1) -- node[right,align=left] {$2n$} (m-3-1);
  \draw (m-2-2) -- node[right,align=left] {$f\left(\mathfrak{p}/p\right)=1$} (m-3-2);
  \draw (m-2-3) -- node[right,align=left] {extension of\\finite fields} (m-3-3);
\end{tikzpicture}\end{center}

  Indeed, by Dedekind's theorem we know that, up to finitely many exceptions, $\mathfrak{p}$ splits in $K_n(\alpha_1)$ if and only if $P$ splits in $\mathcal{O}_{K_n}/\mathfrak{p}[x] \cong \mathbb{F}_{p}[x] $ where the last isomorphism comes from the fact that $\mathfrak{p}$ is of inertial degree $1$. Denote by $C$ the set of degree $1$ primes of $K_n$ which does not split in $K_n(\alpha_1)$. By a standard argument, the prime ideals $\mathfrak{p}$ of $K_n$ of inertial degree $f\left(\mathfrak{p}/p\right)>1$ will give negligible contribution. Moreover, we remark that at most $2n$ degree $1$ primes of $K_n$ correspond to the same rational prime. Hence, we get
  
  \begin{equation}\label{ineqroots}\sum_{p\leq x \atop P \text{ has no root mod }p}\rho_n(p) \geq \frac{1}{2n} \sum_{N_{K_n/\mathbb{Q}}(\mathfrak{p})\leq x \atop \mathfrak{p} \text{ does not split in } K_n(\alpha_1)} 1.\end{equation} 
  The strategy is to apply Chebotarev density theorem in the Galois extension $K_n(\alpha_1)/K_n$. We have using \cite{effective} that there exists an absolute constant $c>0$ such that for all $n\ge 1$
\begin{equation}\label{chebouncond}\sum_{N(\mathfrak{p})\leq x,\, f\left(\mathfrak{p}/p\right)=1 \atop \mathfrak{p} \text{ does not split in }K_n(\alpha_1)} 1 = \frac{\pi(x)}{2} + O\left(Li(x^{\beta_n}) + x\exp\left( -c\sqrt{\frac{\log x}{4n}}\right)\right)\end{equation} where $\beta_n$ is the potential positive real zero of the Dedekind zeta function $\zeta_{K_n(\alpha_1)}$ and $$0<1-\beta_n \ll \frac{1}{\log d_{K_n(\alpha_1)}}\cdot$$ 
  
 By a result of Stark~\cite[Theorem $1'$]{Stark}, there exists an absolute constant $c_1$ such that
 
 \begin{equation}\label{Siegel} \beta_n \leq \max\left\{1-\frac{1}{(4n)!\log\vert d_{K_n(\alpha_1)}\vert}, 1-\frac{1}{c_1\vert d_{K_n(\alpha_1)}\vert^{1/4n}}\right\} \end{equation}

 Using (\ref{missing}), (\ref{ineqroots}) together with (\ref{chebouncond}), we derive
\begin{align} \frac{1}{\pi(x)}\sum_{p\leq x \atop P \text{ has no root mod }p} \left(p-G_{P}(p)\right)&\geq  \sum_{n=1}^{N}\frac{1}{4n} + ET\notag \end{align}

where \begin{equation}\label{errorterm} ET= O\left(\frac{1}{\pi(x)}\sum_{n=1}^{N}Li(x^{\beta_n}) + x\exp\left( -c\sqrt{\frac{\log x}{4n}}\right)\right)\cdot \end{equation}

Hence, we have to choose a parameter $N$ such that 

\begin{equation}\label{choiceN}  \log N \gg \sum_{n=1}^{N}\frac{1}{\pi(x)} \left\{Li(x^{\beta_n}) + x\exp\left( -c\sqrt{\frac{\log x}{4n}}\right)\right\} . \end{equation} The sum of the exponential terms in (\ref{choiceN}) satisfies this as long as 

$$N \ll  \frac{\log x}{(\log \log x)^{2}}  . $$ We have the following bound for the discriminant using transitivity formula and Lemma \ref{distance}
  
  \begin{align}\label{discboundtr} d_{K_n(\alpha_1)}&= d_{K_n}^{2}\cdot N_{K_n/\mathbb{Q}}\left(d_{K_n(\alpha_1)/K_n}\right) \notag \\
 & \leq  \prod_{i<j} \vert\beta_i-\beta_j\vert^{4} d^{n} \notag  \\
 & \ll (n^{4})^{4n^2}  d^n\ll n^{64n^2} d^n  \end{align} where $d$ is the discriminant of the quadratic extension $Spl(P)$. To bound the contribution coming from the potential Siegel zeroes, we apply the result of Stark (\ref{Siegel}) together with the discriminant bound (\ref{discboundtr}) to arrive at

 \begin{equation}\label{sumSiegel} \sum_{n=1}^{N}\frac{1}{\pi(x)}Li(x^{\beta_n})\ll \sum_{n=1}^{N} x^{-\frac{1}{n^{c_2n}}} \ll N x^{-\frac{1}{N^{c_2N}}} \end{equation} where $c_2$ is an absolute constant.

 Using the previous bound (\ref{sumSiegel}) and standard computations, we deduce that inequality (\ref{choiceN}) is true as long as 
 
 \begin{equation}\label{finalchoice} N \ll \frac{\log\log x}{\log \log \log x} .   \end{equation} We used twice Dedekind's theorem and so we have to bound the contribution of ``bad" rational primes and of ``bad" rational primes which have a ``bad" prime of degree $1$ above. Thus, the ``bad" rational primes are exactly the primes dividing $\left[\mathcal{O}_{K_{n}}:\mathbb{Z}\left[\beta_n\right]\right]$  or $\left[\mathcal{O}_{K_{n}(\alpha_1)}:\mathcal{O}_{K_{n}}\left[\alpha_1\right]\right]\leq 2$. We have at most $\omega(2n)\ll \log n$ of such primes and, using (\ref{missing}), their total contribution is at most 

%\begin{align*} \frac{1}{\pi(x)}\sum_{n=1}^{N}\sum_{N_{K_n/\mathbb{Q}}(\mathfrak{p})\leq x \atop \mathfrak{p} \text{ above a "bad" prime }}  \frac{1}{2n} \ll  \frac{1}{\pi(x)}\sum_{n=1}^{N}\sum_{p\leq x \atop p \text{ "bad" }} 1 \\
%\ll \frac{1}{\pi(x)}\sum_{n=1}^{N} \log n \ll \frac{N\log N}{\pi(x)} =o(\log N)\cdot \end{align*}

\begin{equation*}\sum_{n=1}^{N}\frac{1}{\pi(x)}\sum_{p\leq x \atop p \text{` bad" }}\rho_{n}(p) \ll \frac{1}{\pi(x)}\sum_{n=1}^{N} n\log n \ll \frac{N^2\log N}{\pi(x)} = o(\log N). \end{equation*}

 %The "bad" degree $1$ primes in (\ref{chebouncond}) are those which divide $\left[\mathcal{O}_{K_{n}(\alpha_1)}:\mathcal{O}_{K_{n}}\left[\alpha_1\right]\right] \leq 2.$ However, $2$ has at most $2n$ prime divisors in $K_n$, hence the total contribution is at most 
 
% $$ \frac{1}{\pi(x)}\sum_{n=1}^{N} 1 \ll \frac{N}{\pi(x)} = o(\log N)\cdot  $$  

\begin{remark} We remark that we are not working directly in the compositum of the splitting fields of $P$ and $f_n$. The reason is that it is much harder, in general, to prove the ``independence" of the splitting conditions.  \end{remark}

\subsubsection{Proof of Theorem \ref{GRH}}

We consider as before the family of polynomials $f_n$ and the associated family of extensions $K_n$ of degree $2n.$ Following the same lines as in the proof of Theorem \ref{average} and replacing the error term in Chebotarev density theorem by the conditional one (see \cite{effective}), we obtain

\begin{equation}\label{chebogrh}\sum_{N(\mathfrak{p})\leq x,\, f\left(\mathfrak{p}/p\right)=1 \atop \mathfrak{p} \text{ does not split in }K_n(\alpha_1)} 1 = \frac{\pi(x)}{2} + O\left(x^{1/2}(\log d_{K_n(\alpha_1)} + 4n\log x)\right).\end{equation} 

Averaging over the family of polynomials $\left\{f_n(x), 1\leq n\leq N\right\}$ and performing the same computation as in the proof of Theorem \ref{average}, we arrive at

\begin{align} \frac{1}{\pi(x)}\sum_{p\leq x \atop P \text{ has no root mod }p} \left(p-G_{P}(p)\right)&\geq  \sum_{n=1}^{N}\frac{1}{4n} + ET \notag \end{align}

where \begin{equation}\label{errortermgrh} ET= O\left(\frac{1}{\pi(x)}\sum_{n=1}^{N}\left\{x^{1/2}(\log d_{K_n(\alpha_1)} + 4n\log x)\right\}\right).  \end{equation}  Hence, we have to choose a parameter $N$ such that
\begin{equation}\label{choiceparameterGRH} \log N \gg \frac{1}{\pi(x)}\sum_{n=1}^{N}\left\{x^{1/2}(\log d_{K_n(\alpha_1)} + 4n\log x)\right\}.\end{equation} Using the discriminant bound (\ref{discboundtr}), we can bound error term by 
  
  $$ET \ll \sum_{n=1}^{N} \left\{x^{-\frac{1}{2}}\log x(\log(n^{64n^2})+ n\log x)\right\} \ll \frac{\log x}{\sqrt{x}} N^3 .$$ An easy computation shows that the error term is negligible compared to $\log N$  provided that $N \ll x^{1/6}$ and the result follows. 

Arguing as in the proof of Theorem \ref{average}, we can easily deal with the contribution of ``bad" primes.

\subsubsection{Proof of Theorem \ref{averageBinomial}}
As in subsection \ref{Kummer}, we assume that $P(x)=x^d-a\in\mathbb{Z}[x]$ with $d$ coprime to $a$ and $a\neq \pm 1$ squarefree.
The proof follows the same lines as the proof of Theorem \ref{average} and thus we merely sketch some of the modifications required. 

Denoting by $K_n$ the extension $\mathbb{Q}(\beta_n)$ with $\beta_n$ a root of the polynomial $f_n$, we want to apply Chebotarev theorem in the Galois extension $(Spl(P),K_n)/K_n$. Without loss of generality, we can assume that $f_n$ is irreducible\footnote{The irreducible case is in fact the worst. Indeed, if $f_n$ has a lot of factors, it will produce many more roots modulo $p$ and consequently many missing values.} over $\mathbb{Q}$. Indeed, we can replace $f_n$ by an irreducible factor $g_n$ and look for primes $p$ such that $g_n$ has a root modulo $p$ (and so $f_n$). 

Thus, we can assume that $K_n$ is a field obtained by adjoining a root of an irreducible polynomial and apply Dedekind's theorem to relate prime ideals of degree $1$ in $K_n$ with roots of $g_n$ modulo $p$. In order to use results from subsection \ref{Kummer}, we restrict ourselves to the subfamily of polynomials $f_{kq}$ with $(k,q)=1$ and $q$ being the smallest prime factor of $a$. 

We know by Proposition \ref{indfieldsbinom} that $P$ is irreducible over $K_{kq}$ and consequently the Galois group of $P$ over $K_{kq}$ is transitive. Thus, it contains a conjugacy class $C$ consisting of elements without fixed point. Hence, by Chebotarev theorem there exists a positive proportion of primes (depending only on $d$ the degree of $P$) of degree $1$ (over $\mathbb{Q}$) primes $\mathfrak{p}$ of $K_{kq}$ such that the associated Frobenius lies in $C$. 

Similarly as in the proof of Theorem \ref{average}, these prime ideals are associated to rational primes $p$ such that $P$ has no root modulo $p$ and $f_{kq}$ has a root modulo $p$. More precisely, at most $n_{K_{kq}} \leq dkq$ of these ideals lies above the same rational prime $p$. 

Averaging over the family of polynomials $\left\{f_{kq}(x), 1\leq k\leq N\right\}$, we arrive at a similar inequality as (\ref{ineqroots}). Performing the same kind of computation as in the proof of Theorem \ref{average} concludes the proof. 

\begin{remark} The main difference comes from a factor term $1/dq$ which arises because we work in a thinner family of polynomials. This gives the dependence in terms of the degree of the original polynomial $P$ as well as its coefficients in the statement of Theorem \ref{averageBinomial}. \end{remark}

%blabla in degree 2, just adding a root is fine its directly Galois or for the family of trinomials with group A3
% In any case, for poly of degree 3,4 there is always elements in the Galois group without fixed points blabla not possible root modulo 100 percent of primes

%blabla Galois group is known here over Q, see Jacobson Velez 

\section*{Acknowledgements}

The authors would like to thank Andrew Granville and Igor Shparlinski for valuable remarks.
M.M greatly acknowledges support of the Austrian Science Fund (FWF), START-project Y-901 ``Probabilistic methods in analysis and number theory" headed by Christoph Aistleitner.

%\bibliography{perfectpowers}
%\bibliographystyle{plain}

\end{document}